\tikzset{commutative diagrams/diagrams={transform shape, 
  nodes={scale=1.00}} }
\newtheorem{theorem}{Theorem}[section]
\newtheorem{lemma}[theorem]{Lemma}
\newtheorem{proposition}[theorem]{Proposition}
\theoremstyle{definition}
\newtheorem{definition}[theorem]{Definition}
\newtheorem{example}[theorem]{Example}
\newcommand{\NN}{\mathbb{N}}
\newcommand{\RR}{\mathbb{R}}
\newcommand{\ZZ}{\mathbb{Z}}
\newcommand{\FF}{\mathbb{F}}
\newcommand{\cl}{\operatorname{cl}}
\newcommand{\id}{\operatorname{id}}
\newcommand{\Inv}{\operatorname{Inv}}
\newcommand{\Int}{\operatorname{int}}
\newcommand{\Tel}{\operatorname{Tel}}
\newcommand{\Tor}{\operatorname{T}}
\newcommand{\Torp}{\operatorname{T_\bullet}}
\newcommand{\iT}{\#}  
\renewcommand{\phi}{\varphi}
\newcommand{\mapP}{\kappa}
\newcommand{\mapQ}{\lambda}
\newcommand{\Ho} {\operatorname{H}}
\newcommand{\rHo} {\widetilde{\operatorname{H}}}
\newcommand{\CT}{\operatorname{C}\Tor}
\newcommand{\CTp}{\operatorname{C}\Torp}
\newcommand{\incl}{j}
\numberwithin{equation}{section}
\newcommand{\suspend}{I_f}
\DeclareRobustCommand{\coprod}{\mathop{\text{\fakecoprod}}}
\newcommand{\fakecoprod}{%
  \sbox0{$\prod$}%
  \smash{\raisebox{\dimexpr.9625\depth-\dp0}{\scalebox{1}[-1]{$\prod$}}}%
  \vphantom{$\prod$}%
}
\begin{document}

\title[Conley index and mapping torus]
{The Conley index for discrete dynamical systems and the mapping torus}

\author{Frank Weilandt}
\address{
  Institute for Algebra, Geometry, Topology
  and their Applications (ALTA),
  Department of Mathematics,
  University of Bremen,
  Bibliothekstr. 1,
  28359 Bremen,
  Germany
}
\email{weilandt@uni-bremen.de}

\date{January 19, 2018}

\begin{abstract}
The Conley index for flows is a topological invariant
describing the behavior around an isolated invariant set $S$.
It is defined as the homotopy type of a quotient space
$N/L$, where $(N,L)$ is an index pair for $S$.
In the case of a discrete dynamical system,
i.e., a continuous self-map $f\colon X\to X$,
the definition is similar. But one needs to
consider the index map $f_{(N,L)}\colon  N/L\to N/L$ induced by $f$.
The Conley index in this situation is defined as
the homotopy class $[f_{(N,L)}]$ modulo shift equivalence.
The shift equivalence relation is rarely used outside this context
and not well understood in general.
For practical purposes like numerical computations, one needs to use weaker
algebraic invariants for distinguishing Conley indices,
usually homology. Here we consider a topological invariant:
the homotopy type of the mapping torus of the index map $f_{(N,L)}$.
Using a homotopy type offers new ways for comparing
Conley indices -- theoretically and numerically.
We present some basic properties and examples,
compare it to the definition via shift equivalence and sketch an idea for its construction
using rigorous numerics.
\end{abstract}
\keywords{Conley index; shift equivalence; mapping torus; suspension flow}
\subjclass[2010]{37B30; 37B35}

\maketitle

\section{Introduction}
\label{intro}
The classical Conley index for flows as introduced by \citet{C}
describes the local behavior of 
a flow around an isolated invariant set $S$. It is defined
using a so-called index pair $L\subset N$ of compact sets
in the phase space: The set
$S$ is the invariant part of $\cl(N\setminus L)$ and $S\subset\Int(N\setminus L)$.
A trajectory that leaves $N$ has to pass through $L$.
The Conley index of $S$ is then
the pointed homotopy type of the quotient $N/L$.

Starting with \citet{RS}, an analogue of this index
for discrete dynamical systems (continuous self-maps) has been developed.
The definition by \citet{FR},
equivalent to the one presented by \citet{Sz},
uses the homotopy class of a so-called index map 
$[f_{(N,L)}\colon N/L\to N/L]$ up to shift equivalence.
We call this quite general version of the Conley index
the \emph{shift equivalence index} in this article.
Even though this index is homotopy theoretic in spirit,
it is not defined as the homotopy
type of a topological space -- in contrast to the version for flows.
Algebraic versions were already introduced before the shift equivalence index:
One can use functors like homology and then equivalence
relations on linear maps (\citealt{M90}).

In this article, we consider the following invariant of the
shift equivalence index:
the homotopy type of the mapping torus of the index map $f_{(N,L)}$.
The main features of this \emph{mapping torus index} are:
Since it is the homotopy type of a space, it offers additional ways of extracting
information than the homological Conlex index usually used for
numerical computations. A shift equivalence
between maps induces a homotopy equivalence between their mapping tori.
The reduced mapping
torus index is the classical flow Conley index for the suspension semiflow.
The fundamental group of the mapping torus index
contains information not apparent in the homological Conley index.
And its construction as a cell complex seems to be feasible using
rigorous numerics as described in \citet{KMM} -- at least 
as a cell complex with the correct homology.

The definition of the mapping torus index and why it makes sense is presented in 
Sect.~\ref{sec:defIndex} and Sect.~\ref{sec:defnWellDefined}. Its main properties
are shown in Sect.~\ref{sec:MainProperties}. In Sect.~\ref{sec:shiftEqui},
we compare our definition with the shift equivalence index.
Under strong assumptions, a homotopy equivalence of mapping tori
can yield a shift equivalence of self-maps.
After recalling algebraic invariants of the mapping torus, we consider some examples
in Sect.~\ref{sec:examples}.
In Sect.~\ref{sec:compareFloer}, we show that
the reduced mapping torus index coincides with the flow Conley index
of the suspension semiflow.
Sect.~\ref{sec:numerics} sketches how mapping tori could be constructed
without having full information about a map, but only certain enclosures
of its graph.

Despite being apparently a coarse invariant, the mapping torus offers
new ways for understanding shift equivalence. Representing the Conley index
as a space, one can potentially use methods and algorithms developed
for comparing homotopy types.

\paragraph{Basic definitions}
We use the following basic notions of pointed and unpointed topology.
For a topological space $X$ and a subspace $Y \subset X$ with $Y\neq \varnothing$,
we consider the quotient space $X/Y$ as a pointed space with base point
$[Y]$. We let $X/\varnothing := X \coprod \{*\}$, the disjoint union
of $X$ and the one-point space $\{*\}$. An asterisk $*$ 
usually denotes the respective base point.

Given a pointed space $(X,x_0)$ and an unpointed space $Y$, they form
a reduced product:
\[X \rtimes Y := (X,x_0) \rtimes Y := \frac{X \times Y}{\{x_0\} \times Y}.\]
A homotopy from $f$ to $g$ is a continuous map
$H \colon X \times [0,1] \to Y$
such that $H(\cdot,0) = f$ and $H(\cdot,1) = g$.
Given pointed (base point preserving) maps on pointed spaces,
a pointed homotopy is a pointed continuous map
$H \colon (X,x_0) \rtimes [0,1] \to (Y,y_0)$ with analogous properties.
We often omit the base point when this does not lead to confusion.

\section{Definition of the mapping torus index}
\label{sec:defIndex}

From here throughout this article, we let $X$ be a locally compact metric space,
and we let $f\colon X \to X$ be a 
discrete dynamical system, i.e., a continuous map.
Let $M \subset X$ and $x \in M$.
A \emph{solution of $f$ in $M$ through $x$} is a sequence
$\gamma \colon \ZZ \to M$ such that for all $n \in \ZZ: \gamma(n+1) = f(\gamma(n))$
and $\gamma(0) = x$.
The \emph{invariant subset of $M$} is
\[ \Inv(M,f) = \{ x \in M \mid \text{ there is a solution of $f$ in $M$ through $x$}\}. \]
We call a set $S\subset X$ \emph{isolated invariant}
if it is compact and has a neighborhood $M$ such that $\Inv(M,f)=S$.

We use the definition of index pairs from \citet{RS}:
If $A\subset B \subset X$, where $A$ and $B$ are compact,
we call $(B,A)$ a \emph{compact pair}.
Given an isolated invariant set $S\subset X$,
a compact pair $(N,L)$ is an \emph{index pair for $(S,f)$} if
\[ \Inv(\cl(N\setminus L),f) = S \subset \Int(N\setminus L)\]
and the map
$f_{(N,L)}\colon N/L\to N/L$,
\[
f_{(N,L)}([x]):=\begin{cases} [f(x)] &\text{if $x,f(x)\in N\setminus L$,}
                \\
                [L] &\text{otherwise,}
                \end{cases}
\]
is continuous. In this case, we call $f_{(N,L)}$ the \emph{index map}.

For a continuous map $\mapP\colon P \to P$ on some space $P$, we let
its \emph{(unreduced) mapping torus} be
\[ \Tor(\mapP) := \frac{P \times [0,1]}{(x,1) \sim (\mapP(x),0)}. \]
For a pointed continuous map $\mapP\colon (P,p_0) \to (P,p_0)$ on some pointed space $(P,p_0)$, let
its \emph{reduced mapping torus} be
\[ \Torp(\mapP) := \frac{(P,p_0) \rtimes [0,1]}{(x,1) \sim (\mapP(x),0)}. \]
Its homotopy type depends only on the homotopy class of $\mapP$
(see \citealt{R1}, Prop. 6.1(i)).
This gives us two ways of defining a mapping torus (Conley) index.

\begin{definition}
\label{defn:mappingTorusIndex}
Let $(N,L)$ be an index pair for $(S,f)$.
The \emph{(unreduced) mapping torus index} of $(S,f)$
is the homotopy type of $\Tor(f_{(N,L)})$ (an unpointed space). We write
\[ \CT(S,f) := [\Tor(f_{(N,L)})]. \]
The \emph{reduced mapping torus index} of $(S,f)$ is the pointed homotopy type of
$\Torp(f_{(N,L)})$,
we abbreviate this as 
\[\CTp(S,f) := [\Torp(f_{(N,L)})].\]
\end{definition}

We start with basic examples (more are given in Sect.~\ref{sec:examples}). 
The empty invariant set $S=\varnothing$ has an index pair $(\varnothing,\varnothing)$
with index map the pointed map on the one-point space $\{*\}$.
We call the mapping torus index of $(\varnothing,f)$ \emph{trivial}.
For the definitions above, this means:
The unreduced mapping torus index $\CT(S,f)$ is trivial
if it is the homotopy type of the circle $S^1$.
The reduced mapping torus index $\CTp(S,f)$ is trivial if it is the pointed homotopy
type of the one-point space $\{*\}$.

Even though the index map is pointed, we mainly consider
the unreduced mapping torus in this article.
It can contain finer information:
For example, if $f_{(N,L)}$ is the degree-$2$ map on the circle $S^1$, then
$\pi_1(\Torp(f_{(N,L)}))$ is the trivial group, whereas
$\pi_1(\Tor(f_{(N,L)})$ is not isomorphic to the fundamental
group of the circle $S^1$, as we show in Example~\ref{ex:degree2}.

\section{The mapping torus index is well-defined}
\label{sec:defnWellDefined}

Using that the shift equivalence index is well-defined (\citealt{FR}),
one could apply Theorem~\ref{thm:shiftEquiHtpyEqui} to show
that the mapping torus index is also well-defined. In this section,
we present a more direct proof. We first recall Theorem~\ref{thm:torus_choice_index_pair},
which was already shown by \citet{RS}.
But there it is assumed that $X$ is a manifold and $f$ a diffeomorphism.
We recall details of the theory therein to show that the theorem also
holds in our context of a self-map on the metric space $X$.
For a continuous map $\mapP \colon P \to P$, consider the map
\begin{align*}
(P \times [0,1])\times[0,\infty) &\to \Tor(\mapP), \\
 ((x,\theta),t) &\mapsto [\mapP^{\lfloor \theta+t\rfloor}(x), \theta+t-\lfloor \theta+t\rfloor],
\end{align*}
where $\lfloor\cdot\rfloor\colon [0,\infty)\to \NN$
denotes the floor function.
This map is continuous and sends $((x,1),t)$ and $((\mapP(x),0),t)$ to the same point
\[ (\mapP^{\lfloor 1+t\rfloor}(x), 1+t-\lfloor 1+t\rfloor)
= (\mapP^{\lfloor t\rfloor}(\mapP(x)), t - \lfloor t \rfloor ) \in P\times [0,1). \]
Hence, it induces the continuous \emph{suspension semiflow}
\begin{align*}
\phi_\mapP\colon \Tor(\mapP) \times [0,\infty)&\to \Tor(\mapP),\\
 ([x,\theta],t) &\mapsto [\mapP^{\lfloor \theta+t\rfloor}(x), \theta+t-\lfloor \theta+t\rfloor].
\end{align*}

Let $\incl_\mapP \colon P \to \Tor(f),\ \incl_\mapP(x) = [x,0].$
Given maps $\mapP\colon P \to P$, $\mapQ\colon Q\to Q$ and
a map $r\colon P \to Q$ such that $\mapQ r= r \mapP$, let
the induced map
$r_\iT\colon \Tor(\mapP) \to \Tor(\mapQ)$ be given by
$r_\iT[x,\theta] = [r(x),\theta]$.
This definition makes the following diagram commute.
\begin{equation}
\label{diagram:induced_map}
\begin{tikzcd}
  P \arrow{r}{\mapP} \arrow{d}{r} & P \ar{d}{r} \ar{r}{j_\mapP} & \Tor(\mapP) \ar[dashed]{d}{r_\iT}\\
  Q \ar{r}{\mapQ}                 & Q \ar{r}{j_\mapQ} & \Tor(\mapQ)
\end{tikzcd}
\end{equation}

\begin{lemma}
\label{lem:induce_id}
If $P=Q$, $\mapP=\mapQ$ and $r=\mapP^n$ for some $n\geq 0$,
then the induced map $r_\iT=\mapP^n_\iT$
is homotopic to the identity on $\Tor(\mapP).$
In particular, $j_\mapP \mapP^n \simeq j_\mapP$.
\end{lemma}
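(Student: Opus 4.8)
The plan is to use the suspension semiflow $\phi_\mapP$ on $\Tor(\mapP)$ to slide points around the torus. The key observation is that $\phi_\mapP$ moves a point $[x,0]$ forward along the $[0,1]$-coordinate, and after time $1$ it arrives at $[\mapP(x),0]$; after time $n$ it arrives at $[\mapP^n(x),0]$. So the time-$n$ map of the semiflow, restricted to the image of $j_\mapP$, realizes exactly $j_\mapP \mapP^n$. More precisely, I would define $H\colon \Tor(\mapP)\times[0,1]\to\Tor(\mapP)$ by $H([x,\theta],s) := \phi_\mapP([x,\theta], ns)$, i.e. $H([x,\theta],s) = [\mapP^{\lfloor\theta+ns\rfloor}(x),\, \theta+ns-\lfloor\theta+ns\rfloor]$. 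This is continuous since $\phi_\mapP$ is. At $s=0$ it is the identity on $\Tor(\mapP)$, and at $s=1$ it is the map $[x,\theta]\mapsto[\mapP^{\lfloor\theta+n\rfloor}(x),\theta]$.

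The next step is to identify that $s=1$ endpoint with $\mapP^n_\iT$. For $\theta\in[0,1)$ we have $\lfloor\theta+n\rfloor = n$, so $H([x,\theta],1) = [\mapP^n(x),\theta] = \mapP^n_\iT[x,\theta]$ for all such representatives; and one checks this is consistent at $\theta=1$ via the gluing relation (where $[x,1]=[\mapP(x),0]$ maps to $[\mapP^n(\mapP(x)),0] = [\mapP^{n+1}(x),0] = [\mapP^n(x),1]$), so $H(\cdot,1)$ is well-defined on the quotient and equals $\mapP^n_\iT$. Therefore $H$ is a homotopy from $\id_{\Tor(\mapP)}$ to $\mapP^n_\iT$, proving the first claim.

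For the ``in particular'' statement, I would precompose the homotopy $H$ with $j_\mapP\colon P\to\Tor(\mapP)$: the map $H(j_\mapP(\cdot),s)\colon P\times[0,1]\to\Tor(\mapP)$ is a homotopy from $j_\mapP$ (at $s=0$) to $\mapP^n_\iT\circ j_\mapP$ (at $s=1$), and by the commutativity of diagram~\eqref{diagram:induced_map} with $r=\mapP^n$, $\mapQ=\mapP$ we have $\mapP^n_\iT\circ j_\mapP = j_\mapP\circ\mapP^n$. Hence $j_\mapP\mapP^n\simeq j_\mapP$.

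The only genuinely delicate point is checking continuity and well-definedness of $H(\cdot,1)$ across the gluing locus $\theta=1\sim 0$ and, more generally, confirming that the floor-function formula descends to the quotient $\Tor(\mapP)$ — but this is exactly the computation already carried out in the excerpt when defining $\phi_\mapP$, so it requires no new work. The case $n=0$ is trivial since then $H$ is the constant homotopy at the identity. If one wants to avoid invoking the semiflow one could write $H$ out by hand and verify continuity directly, but reusing $\phi_\mapP$ is cleaner.
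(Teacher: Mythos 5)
Your proof is correct and uses exactly the paper's approach: both define the homotopy by reparametrizing the suspension semiflow $\phi_\mapP$ on $\Tor(\mapP)$, using that the time-$0$ map is the identity and the time-$n$ map is $\mapP^n_\iT$. The paper's proof is more terse (it just states the two endpoint identities), while you spell out the floor-function computation and the well-definedness at the seam $\theta=1\sim 0$, but the substance is identical.
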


\begin{proof}
The suspension semiflow defines a homotopy because
\begin{align*}
\id_{\Tor(\mapP)}[x,\theta] &= [x,\theta] = \phi_\mapP([x,\theta],0) \text{ and} \\
\mapP_\iT^n[x,\theta] &= [\mapP^n(x),\theta] = \phi_\mapP([x,\theta],n).
\end{align*}
\end{proof}

We want to relate two indices given different index pairs
for $(S,f)$. We recall the proof of Theorem~\ref{thm:propGeneralIndexMaps},
which is basically Theorem~6.3 from \citet{RS}.
There it was originally stated for flows and
invertible discrete systems. We only need it for discrete systems here and
the invertibility assumption was not used in the original proof.
\begin{lemma}[\citealt{RS}, Theorem~4.3]
\label{prop:cont}
For a compact pair $(N,L)$, the map $f_{(N,L)}$ as defined above
is continuous if and only if both of the following conditions are fulfilled
for every $x_0 \in f^{-1}(N\setminus L)$:
\begin{enumerate}
\item\label{prop:cont1} If $x_0\in L$, then there 
is an open set $U\subset X$ such that $x_0\in U$ and
$f(U\cap N\setminus L)\subset X\setminus N$.
\item\label{prop:cont2} If $x_0\in N\setminus L$, then
there is an open set $U\subset X$ such that $x_0\in U$ and 
$f(U\cap N\setminus L) \subset N\setminus L$.
\end{enumerate}
\end{lemma}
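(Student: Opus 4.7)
The plan is to check continuity of $f_{(N,L)}$ point-by-point via the quotient map $q \colon N \to N/L$; by the universal property, it is enough to analyze the composite $g := f_{(N,L)} \circ q \colon N \to N/L$, which takes the value $[f(x)]$ when $x \in N \setminus L$ and $f(x) \in N \setminus L$, and the value $[L]$ otherwise. I would first observe that at any $x_0 \in N$ with $f(x_0) \notin N \setminus L$, continuity of $g$ at $x_0$ is automatic from continuity of $f$ alone: if $f(x_0) \in X \setminus N$, a small $X$-open neighborhood of $f(x_0)$ disjoint from $N$ makes $g$ constantly $[L]$ near $x_0$; if $f(x_0) \in L$, any basic neighborhood of $[L]$ in $N/L$ comes from an $X$-open set containing $L$, and $f$ sends an $X$-neighborhood of $x_0$ into it, after which a case split on whether nearby $x$ lies in $L$ or in $N \setminus L$ shows $g$ lands in the prescribed neighborhood. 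Hence the equivalence need only be checked at $x_0 \in f^{-1}(N \setminus L)$.

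For the forward direction at such $x_0$, I would treat two subcases. If $x_0 \in N \setminus L$, then $g(x_0) = [f(x_0)] \neq [L]$; a neighborhood $W$ of $[f(x_0)]$ in $N/L$ disjoint from $\{[L]\}$ pulls back via $g$ to an $X$-open $U \ni x_0$, and for every $x \in U \cap (N \setminus L)$ the inclusion $g(x) \in W$ excludes $g(x) = [L]$ and so forces $f(x) \in N \setminus L$, which is exactly condition (ii). If $x_0 \in L$, I would argue by contradiction using sequences (available since $X$ is metric): if no open $U \ni x_0$ satisfies $f(U \cap (N \setminus L)) \subset X \setminus N$, one extracts $y_n \to x_0$ with $y_n \in N \setminus L$ and $f(y_n) \in N$; continuity of $f$ gives $f(y_n) \to f(x_0) \in N \setminus L$, and since $L$ is closed in $X$ with $f(x_0) \notin L$, eventually $f(y_n) \in N \setminus L$; then $g(y_n) = [f(y_n)] \to [f(x_0)] \neq [L] = g(x_0)$ in the Hausdorff space $N/L$ (Hausdorff because $L$ is closed in the normal space $N$), contradicting continuity of $g$ at $x_0$.

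The converse uses the conditions directly. At $x_0 \in (N \setminus L) \cap f^{-1}(N \setminus L)$, condition (ii) supplies an $X$-open $U \ni x_0$ with $f(U \cap (N \setminus L)) \subset N \setminus L$, so on $U \cap (N \setminus L)$ the map $g$ coincides with the continuous composite $q \circ f$ and is continuous at $x_0$. At $x_0 \in L \cap f^{-1}(N \setminus L)$, condition (i) supplies $U$ with $f(U \cap (N \setminus L)) \subset X \setminus N$; then on $U \cap N$ every point is either in $L$ (so $g = [L]$) or in $N \setminus L$ with $f(x) \in X \setminus N$ (so $f(x) \notin N \setminus L$, and again $g(x) = [L]$), so $g$ is constantly $[L]$ there, trivially continuous. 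The principal obstacle is the sequence argument in the forward case (i): extracting the surprisingly strong conclusion $f(x) \in X \setminus N$ (rather than merely $f(x) \notin N \setminus L$) for $x \in N \setminus L$ near $x_0 \in L$ hinges on selecting a neighborhood of $f(x_0)$ that avoids the closed set $L$ entirely, so that the only alternatives $f(y_n) \in N \setminus L$ or $f(y_n) \in X \setminus N$ can be contradicted with continuity of $g$.
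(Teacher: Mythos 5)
The paper does not actually prove this lemma itself---it simply cites Robbin and Salamon, Theorem~4.3---so there is no in-paper argument to compare against. Your proof is a correct, self-contained reconstruction. The structure is the natural one: pass to $g := f_{(N,L)}\circ q$ via the universal property of the quotient; show that at any $x_0\in N$ with $f(x_0)\notin N\setminus L$ continuity is automatic, thereby isolating the two cases $x_0\in L$ and $x_0\in N\setminus L$ with $f(x_0)\in N\setminus L$ that appear in the statement; and then verify the equivalence in each. The forward case $x_0\in L$ is the only delicate step, and your handling of it is sound: it uses that $L$ is compact (hence closed in $X$) to pass from $f(y_n)\to f(x_0)\notin L$ to $f(y_n)\in N\setminus L$ eventually, and that $N/L$ is Hausdorff (as the quotient of a compact metric space by a closed subset) to get uniqueness of limits and the contradiction $[f(x_0)]=[L]$. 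One small stylistic remark: the sequence argument is harmless here because $X$ is metric, but you could also phrase that case purely with open sets by choosing a compact neighborhood $K\subset N\setminus L$ of $f(x_0)$ and using $W=(N/L)\setminus q(K)$ as the offending neighborhood of $[L]$; the sequence version you chose is the cleaner of the two.
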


Define for an arbitrary subset $M \subset X$ and $n\geq 0$:
\[ \Inv^n(M,f) = \{f^n(x) \mid x, f(x), \ldots, f^{2n}(x) \in M\}. \]
Let $(N_\alpha,L_\alpha)$ and $(N_\beta,L_\beta)$ be index pairs for $(S,f)$.
Then there is a number
$n\geq 0$ such that
\[ \Inv^n(N_\beta\setminus L_\beta,f) \subset N_\alpha\setminus L_\alpha \text{ and }
   \Inv^n(N_\alpha\setminus L_\alpha,f) \subset N_\beta\setminus L_\beta. \]
Let $u=u(\alpha,\beta)$ be the smallest $n\geq 0$ with this property.
Obviously, $u(\alpha,\beta) = u(\beta,\alpha)$, and we get
the following property right from the definition of $u=u(\alpha,\beta)$.
\begin{lemma}
\label{lemma:minusU}
For any $x\in X$,
if $f^{[0,2u]}(x) \subset N_\alpha\setminus L_\alpha$, then
$f^u(x) \in N_\beta\setminus L_\beta$.
\end{lemma}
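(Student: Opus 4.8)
The plan is simply to unwind the definitions: the hypothesis of the lemma is, word for word, the condition that places $f^u(x)$ in $\Inv^u(N_\alpha\setminus L_\alpha,f)$, and then the defining property of $u=u(\alpha,\beta)$ finishes the argument. No topology is needed.

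In detail, I would first observe that $f^{[0,2u]}(x)\subset N_\alpha\setminus L_\alpha$ means precisely that all $2u+1$ iterates $x,f(x),\dots,f^{2u}(x)$ lie in $N_\alpha\setminus L_\alpha$. Comparing this with the definition $\Inv^n(M,f)=\{f^n(y)\mid y,f(y),\dots,f^{2n}(y)\in M\}$, taken with $n=u$, $M=N_\alpha\setminus L_\alpha$ and $y=x$, we see that $f^u(x)$ is exhibited as an element of $\Inv^u(N_\alpha\setminus L_\alpha,f)$.

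Second, I would invoke the choice of $u=u(\alpha,\beta)$. By construction it is the smallest $n\ge 0$ for which $\Inv^n(N_\beta\setminus L_\beta,f)\subset N_\alpha\setminus L_\alpha$ and $\Inv^n(N_\alpha\setminus L_\alpha,f)\subset N_\beta\setminus L_\beta$; the second of these inclusions, taken with $n=u$, gives $f^u(x)\in N_\beta\setminus L_\beta$, which is the claim.

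I do not expect any real obstacle here. The only point requiring care is the index bookkeeping: the hypothesis runs over iterates up to $2u$ (matching the ``$2n$'' in the definition of $\Inv^n$), and the conclusion concerns the middle iterate $f^u(x)$. None of the continuity machinery (Lemma~\ref{prop:cont}) or the suspension semiflow is needed; the statement is a purely combinatorial consequence of how $u$ was defined, which is why the excerpt remarks that it follows ``right from the definition''.
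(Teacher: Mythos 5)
Your proof is correct and is exactly the unwinding of definitions that the paper has in mind (the paper omits a proof, stating only that the lemma follows ``right from the definition'' of $u$). The identification of $f^u(x)$ as an element of $\Inv^u(N_\alpha\setminus L_\alpha,f)$ and the appeal to the defining inclusion $\Inv^u(N_\alpha\setminus L_\alpha,f)\subset N_\beta\setminus L_\beta$ are precisely the intended argument.
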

Now we define
\[ C_{\alpha\beta}:=\{x\in N_\alpha\setminus L_\alpha \mid
  f^{[0,2u]}(x) \subset N_\alpha\setminus L_\alpha \text{ and }
    f^{[u+1,3u+1]}(x)\subset N_\beta\setminus L_\beta\}\]
and the (not necessarily continuous) map
\begin{align*}
  f_{\beta\alpha} \colon N_\alpha/L_\alpha & \to N_\beta/L_\beta, \\
  x & \mapsto \begin{cases} 
	f^{3u+1}(x) & \text{if $x\in C_{\alpha\beta}$,}\\
    [L_\beta] & \text{otherwise}.
  \end{cases}
\end{align*}
A special case is $\alpha=\beta$. Then $u(\alpha,\alpha)=0$ and
$f_\alpha:=f_{\alpha\alpha}=f_{(N_\alpha,L_\alpha)}$. The following theorem
allows us to compare index maps.

\begin{theorem}[\citealt{RS}, Theorem~6.3]
\label{thm:propGeneralIndexMaps}
\leavevmode
\begin{enumerate}
\item\label{continuity} $f_{\beta\alpha}$ is continuous,
\item\label{composition1} 
$f_{\alpha\beta}\circ f_{\beta\alpha} = f_\alpha^{6u(\alpha,\beta)+2}$,
\item\label{composition2} $f_{\beta\alpha}\circ f_\alpha = f_\beta \circ f_{\beta\alpha}$.
\end{enumerate}
\end{theorem}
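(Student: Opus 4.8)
The plan is to verify the three assertions in the order \eqref{continuity}, \eqref{composition2}, \eqref{composition1}, since continuity is logically prior and the last identity will follow most cleanly once we have the commutation relation in hand. For \eqref{continuity}, I would apply Lemma~\ref{prop:cont} to the composite map $g := f^{3u+1}$ in place of $f$, with index pair $(N_\alpha, L_\alpha)$ as the source and $(N_\beta, L_\beta)$ as the target. Concretely, one must check the two conditions of Lemma~\ref{prop:cont} at each $x_0$ for which $g$ sends a neighborhood-trace into $N_\beta \setminus L_\beta$: if $x_0 \in L_\alpha$ we produce an open $U$ with $g(U \cap (N_\alpha \setminus L_\alpha))$ escaping $N_\beta$, and if $x_0 \in N_\alpha \setminus L_\alpha$ we produce an open $U$ with $g(U \cap (N_\alpha \setminus L_\alpha)) \subset N_\beta \setminus L_\beta$. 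The key observation is that the set $C_{\alpha\beta}$ is defined by finitely many \emph{open} conditions of the form $f^k(x) \in N_\alpha \setminus L_\alpha$ or $f^k(x) \in N_\beta \setminus L_\beta$ (each such preimage is relatively open because $N_\alpha \setminus L_\alpha$ and $N_\beta \setminus L_\beta$ are open in the respective $N/L$), together with the continuity of each iterate $f^k$; so near any point of $C_{\alpha\beta}$ the map $f_{\beta\alpha}$ is literally $f^{3u+1}$ followed by a projection, and near any point with some forbidden containment failing, the definition of $u$ via Lemma~\ref{lemma:minusU} forces the escape behavior needed for condition~\eqref{prop:cont1}. The honest bookkeeping here is tracking exactly which window of iterates must lie in which set; I'd set up the indices once, carefully, and then the verification is a direct chase.

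For \eqref{composition2}, the identity $f_{\beta\alpha} \circ f_\alpha = f_\beta \circ f_{\beta\alpha}$ is checked pointwise on $N_\alpha / L_\alpha$. On the base point $[L_\alpha]$ both sides are $[L_\beta]$, so fix $x \in N_\alpha \setminus L_\alpha$. The left side evaluates $f_{\beta\alpha}$ at $f_\alpha(x)$, which is $[f(x)]$ if $f(x) \in N_\alpha \setminus L_\alpha$ and $[L_\alpha]$ otherwise; the right side applies $f_\beta$ to $f_{\beta\alpha}(x)$. The content is that the membership test ``$f(x) \in C_{\alpha\beta}$'' appearing on the left matches up, after the shift by one iterate, with the test ``$x \in C_{\alpha\beta}$ and $f^{3u+1}(x) \in N_\beta \setminus L_\beta$ with $f^{3u+2}(x) \in N_\beta\setminus L_\beta$'' appearing on the right — and when both tests pass, both sides equal $[f^{3u+2}(x)]$. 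When a test fails on one side, I must check the other side also lands on $[L_\beta]$; this is again a matter of comparing the index windows $[0,2u]$ and $[u+1,3u+1]$ shifted by one against $[1, 2u+1]$ and $[u+2, 3u+2]$, using $\Inv$-type inclusions to discard the boundary cases. One subtlety worth flagging: the ``otherwise'' branch of $f_\alpha$ can collapse $f(x)$ to $[L_\alpha]$, and I need that this collapse is compatible with the right-hand side collapsing to $[L_\beta]$, which uses that $x \notin C_{\alpha\beta}$ follows once $f(x) \notin N_\alpha\setminus L_\alpha$ while $f^{[0,2u]}(x)$ is required to stay in $N_\alpha\setminus L_\alpha$ (so if $2u \geq 1$ the containment already fails; the edge case $u=0$ is exactly the $\alpha=\beta$ situation and is trivial).

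For \eqref{composition1}, I would compute $f_{\alpha\beta} \circ f_{\beta\alpha}$ directly: $f_{\beta\alpha}$ applies $f^{3u+1}$ (on the good set $C_{\alpha\beta}$) landing in $N_\beta/L_\beta$, then $f_{\alpha\beta}$ applies $f^{3u+1}$ again (on the good set $C_{\beta\alpha}$, and recall $u(\alpha,\beta)=u(\beta,\alpha)=u$), for a total of $f^{6u+2}$ when both membership conditions hold; the claim is that the conjunction of ``$x\in C_{\alpha\beta}$'' and ``$f^{3u+1}(x) \in C_{\beta\alpha}$'' is equivalent to ``$f^{[0,6u+2]}(x) \subset N_\alpha\setminus L_\alpha$'', i.e.\ exactly the condition for $f_\alpha^{6u+2}(x) \neq [L_\alpha]$. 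One inclusion is obtained by concatenating the windows from the two $C$-conditions; the other — the harder direction — needs Lemma~\ref{lemma:minusU} (and its $(\beta,\alpha)$ mirror) to promote a long run in $N_\alpha\setminus L_\alpha$ into the required intermediate runs in $N_\beta\setminus L_\beta$. I expect the main obstacle to be precisely this index-arithmetic in \eqref{composition1}: lining up the three overlapping windows of lengths controlled by $u$ so that the two sides agree on \emph{every} point, including all the degenerate cases where exactly one of the intermediate points sits on the boundary $L_\alpha$ or $L_\beta$. Once the window inclusions are pinned down, the statement follows; and since \eqref{continuity}, \eqref{composition1}, \eqref{composition2} are each pointwise or local, no global topological input beyond Lemma~\ref{prop:cont} and the definition of $u$ is required.
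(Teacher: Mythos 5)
There is a genuine gap in your argument for part \eqref{continuity}, and it is exactly at the point where the real work in the paper's proof happens. You claim that $C_{\alpha\beta}$ is ``defined by finitely many open conditions of the form $f^k(x)\in N_\alpha\setminus L_\alpha$,'' each of which is open ``because $N_\alpha\setminus L_\alpha$ is open in $N_\alpha/L_\alpha$.'' But the sets $N_\alpha\setminus L_\alpha$ and $N_\beta\setminus L_\beta$ are only open \emph{in $N_\alpha$ and $N_\beta$, respectively}, not in $X$. Since $f^k\colon X\to X$ is not a self-map of $N_\alpha\setminus L_\alpha$, the preimage $f^{-k}(N_\alpha\setminus L_\alpha)$ need not be relatively open in $N_\alpha/L_\alpha$ (take $N_\alpha=[0,1]$, $L_\alpha=\{1\}$: then $N_\alpha\setminus L_\alpha=[0,1)$ is not open in $\RR$, and a continuous $f^k$ can pull it back to a non-open set meeting $N_\alpha$ badly). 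This is precisely why the paper does \emph{not} try to read off continuity directly: instead, for each $i$ in the relevant window it applies Lemma~\ref{prop:cont}\ref{prop:cont2} to the single-step map $f$ at $f^i(x_0)$, obtaining open sets $U_i$, $V_i$ with the forward-invariance property $f(U_i\cap N_\alpha\setminus L_\alpha)\subset N_\alpha\setminus L_\alpha$ (resp.\ into $N_\beta\setminus L_\beta$), and then intersects their preimages into a single $U$; that chaining argument is what makes ``stays in $N\setminus L$ for $3u+1$ steps'' an open condition \emph{relative to $N_\alpha/L_\alpha$} near $x_0$.

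The related suggestion of applying Lemma~\ref{prop:cont} to $g:=f^{3u+1}$ ``with $(N_\alpha,L_\alpha)$ as source and $(N_\beta,L_\beta)$ as target'' also does not go through as stated: the lemma is formulated for a single compact pair $(N,L)$ and a single index map $g_{(N,L)}\colon N/L\to N/L$, and even a generalization to two pairs would compare $g(x)$ against $N_\beta\setminus L_\beta$, whereas $f_{\beta\alpha}$ is gated by the stronger condition $x\in C_{\alpha\beta}$ (a window of iterates in $N_\alpha\setminus L_\alpha$ \emph{and} a shifted window in $N_\beta\setminus L_\beta$), so its non-trivial branch is not of the form ``$x,g(x)\in\cdot\setminus\cdot$.'' Your sketches for \eqref{composition1} and \eqref{composition2} — direct pointwise checks with careful window bookkeeping, using Lemma~\ref{lemma:minusU} and $u(\alpha,\beta)=u(\beta,\alpha)$ to promote a long run in $N_\alpha\setminus L_\alpha$ into the required runs in $N_\beta\setminus L_\beta$ — are the right idea and consistent with what the paper (following Robbin--Salamon) does, but to repair \eqref{continuity} you need to replace the ``openness'' claim with the chained forward-invariance construction above.
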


\begin{proof}
The idea for the proof of~\ref{continuity} is to consider
five cases depending on where $x_0$ lies within $N_\alpha/L_\alpha$,
and to show for each case that $f_{\beta\alpha}$ is continuous in $x_0$.
We do not recall all cases here,
but only present one difficult case from the proof in \citealt{RS}
slightly adapted to our needs here.
The proofs of the other four cases are similar or shorter.
We mainly use Lemma~\ref{lemma:minusU} and the continuity of the index maps
$f_\alpha$ and $f_\beta$.

\emph{Case $x_0 \in C_{\alpha\beta}$.}
We mainly need to show that there is an open set $U\subset X$ such that $x_0 \in U$ and
$U\cap N_\alpha\setminus L_\alpha \subset C_{\alpha\beta}$.

Note that $f^i(x_0)\in f^{-1}(N_\alpha\setminus L_\alpha)$
for $i \in \{0,\ldots, 2u-1\}$ by definition of $C_{\alpha\beta}$.
Using Lemma~\ref{prop:cont}\ref{prop:cont2},
there are open sets $U_i\subset X$ such that $f^i(x_0) \in U_i$ and
\[ f(U_i \cap N_\alpha\setminus L_\alpha) \subset N_\alpha\setminus L_\alpha.\]
Since $x_0 \in C_{\alpha\beta}$, Lemma~\ref{lemma:minusU} yields $f^u(x_0) \in N_\beta\setminus L_\beta$.
Now, applying Lemma~\ref{prop:cont}\ref{prop:cont2} to the index pair
$(N_\beta,L_\beta)$ yields: For each $i\in\{u,\ldots,3u\}$, there is
an open set $V_i\subset X$ such that $f^i(x_0) \in V_i$ and
\[ f(V_i \cap N_\beta\setminus L_\beta) \subset N_\beta\setminus L_\beta.\]
For $0\leq i \leq 3u$, we define open sets
\[ W_i := \begin{cases}
  U_i &\text{ if $0\leq i \leq u-1$},\\
  U_i \cap V_i &\text{ if $u \leq i \leq 2u-1$},\\
  V_i &\text{ if $2u\leq i \leq 3u$}.
\end{cases}\]
Now we let 
\[U:=\bigcap_{i=0}^{3u} f^{-i}(W_i) \subset X,\]
an open set.
Let $x\in U\cap N_\alpha\setminus L_\alpha$.
For $0\leq i\leq 2u-1$, we have the implication
\[ f^i(x)\in U_i \cap N_\alpha\setminus L_\alpha
\implies f^{i+1}(x)\in U_{i+1}\cap N_\alpha\setminus L_\alpha.\]
Overall, $f^{[0,2u]}(x)\subset N_\alpha\setminus L_\alpha$,
and therefore, by
Lemma~\ref{lemma:minusU}, $f^u(x)\in U_u \cap N_\beta\setminus L_\beta$.
This implies $f^{[u,3u+1]}(x)\in N_\beta\setminus L_\beta$.
Hence, $x\in C_{\alpha\beta}$.

Therefore, $f_{\beta\alpha}(x)=f^{3u+1}(x)$ for every
$x\in U\cap N_\alpha\setminus L_\alpha$.
Note that $U\cap N_\alpha\setminus L_\alpha$
is open in $N_\alpha\setminus L_\alpha$
and therefore open in $N_\alpha/L_\alpha$.
Since $f$ is continuous,
$f_{\beta\alpha}$ is continuous in $x_0$.
This finishes the proof of the case $x_0 \in C_{\alpha\beta}$.

The statements~\ref{composition1} and~\ref{composition2} are special cases of
Theorem~6.3(iii) in \citet{RS}.
\end{proof}

\begin{theorem}
\label{thm:torus_choice_index_pair}
The mapping torus index of $(S,f)$ is independent of the choice of an
index pair $(N,L)$.
\end{theorem}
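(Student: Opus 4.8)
The plan is to deduce Theorem~\ref{thm:torus_choice_index_pair} directly from Theorem~\ref{thm:propGeneralIndexMaps} together with Lemma~\ref{lem:induce_id}, in the spirit of the argument in \citet{RS}. Fix two index pairs $(N_\alpha,L_\alpha)$ and $(N_\beta,L_\beta)$ for $(S,f)$, write $u=u(\alpha,\beta)$, and consider the continuous maps $f_{\beta\alpha}\colon N_\alpha/L_\alpha\to N_\beta/L_\beta$ and $f_{\alpha\beta}\colon N_\beta/L_\beta\to N_\alpha/L_\alpha$ provided by part~\ref{continuity}. By part~\ref{composition2}, these maps intertwine the two index maps, i.e.\ $f_{\beta\alpha}\circ f_\alpha = f_\beta\circ f_{\beta\alpha}$ and, by symmetry of $u$, also $f_{\alpha\beta}\circ f_\beta = f_\alpha\circ f_{\alpha\beta}$. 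Hence each of them satisfies the hypothesis of diagram~\eqref{diagram:induced_map} and induces maps on mapping tori
\[ (f_{\beta\alpha})_\iT\colon \Tor(f_\alpha)\to\Tor(f_\beta),\qquad
   (f_{\alpha\beta})_\iT\colon \Tor(f_\beta)\to\Tor(f_\alpha). \]

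Next I would check that these induced maps are mutually inverse up to homotopy. Composing and using functoriality of the $(-)_\iT$ construction — which is immediate from the formula $r_\iT[x,\theta]=[r(x),\theta]$, so that $(s\circ r)_\iT = s_\iT\circ r_\iT$ whenever both are defined — we get
\[ (f_{\alpha\beta})_\iT\circ(f_{\beta\alpha})_\iT
 = (f_{\alpha\beta}\circ f_{\beta\alpha})_\iT
 = (f_\alpha^{\,6u+2})_\iT, \]
using part~\ref{composition1}. By Lemma~\ref{lem:induce_id} applied with $n=6u+2\geq 0$, the map $(f_\alpha^{\,6u+2})_\iT$ is homotopic to the identity on $\Tor(f_\alpha)$. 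The completely symmetric computation, again with the same exponent $6u(\alpha,\beta)+2 = 6u(\beta,\alpha)+2$, shows $(f_{\beta\alpha})_\iT\circ(f_{\alpha\beta})_\iT \simeq \id_{\Tor(f_\beta)}$. Therefore $(f_{\beta\alpha})_\iT$ is a homotopy equivalence $\Tor(f_\alpha)\simeq\Tor(f_\beta)$, so the unreduced mapping torus index is well-defined. The reduced case is identical: $f_{\beta\alpha}$ and $f_{\alpha\beta}$ are pointed (they send classes to $[L_\beta]$, resp.\ $[L_\alpha]$, on the complement of $C_{\alpha\beta}$, resp.\ $C_{\beta\alpha}$), so the same diagram and the same argument run through with $\Torp$ in place of $\Tor$, yielding a pointed homotopy equivalence; alternatively one invokes that the reduced mapping torus depends only on the pointed homotopy class of the map.

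There is no single hard obstacle here, since Theorem~\ref{thm:propGeneralIndexMaps} does all the heavy lifting; the only points requiring a little care are bookkeeping ones. First, I must make sure that $f_{\beta\alpha}$, though defined on the quotient $N_\alpha/L_\alpha$ by a formula that a priori only manipulates points of $X$, genuinely lifts to a map compatible with $f_\alpha$ in the sense required by \eqref{diagram:induced_map} — this is exactly the content of parts~\ref{continuity} and~\ref{composition2}, so it amounts to quoting them correctly. Second, the exponent must be non-negative for Lemma~\ref{lem:induce_id} to apply; since it equals $6u+2$ with $u\geq 0$ this is automatic. Third, one should note that $(-)_\iT$ is functorial and sends pointed maps to pointed maps, which is evident from its definition. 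With these observations in place the proof is just the diagram chase sketched above.
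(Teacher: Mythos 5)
Your argument is correct and is essentially the same as the paper's: set $r=f_{\beta\alpha}$, $s=f_{\alpha\beta}$, $n=6u(\alpha,\beta)+2$, invoke Theorem~\ref{thm:propGeneralIndexMaps} to get the intertwining and composition identities, and then apply Lemma~\ref{lem:induce_id} together with the evident functoriality $(sr)_\iT = s_\iT r_\iT$ to conclude $\Tor(f_\alpha)\simeq\Tor(f_\beta)$. The remarks you add on pointedness for the reduced case correspond to the paper's one-line remark following the proof.
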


\begin{proof}
Let $(N_\alpha,L_\alpha)$ and $(N_\beta,L_\beta)$ be index pairs
for $(S,f)$.
Now let $r:=f_{\beta\alpha}$, $s:=f_{\alpha\beta}$ and $n:=6u(\alpha,\beta)+2$.
Theorem~\ref{thm:propGeneralIndexMaps} shows that
\begin{enumerate}
\item $r f_\alpha = f_\beta r$ and
  $s f_\beta = f_\alpha  s$,
\item $s r = f_\alpha^n$ and $r s = f_\beta^n$.
\end{enumerate}
Then, Lemma~\ref{lem:induce_id} yields a homotopy equivalence
\[ s_\iT r_\iT = (sr)_\iT = (f_\alpha^n)_\iT \simeq \id_{\Tor(f_\alpha)}, \]
and similarly for $r_\iT s_\iT$.
Therefore $\Tor(f_\alpha) \simeq \Tor(f_\beta)$.
\end{proof}

Similarly, $\CTp(S,f)$ is a well-defined pointed homotopy type.

\section{Main properties}
\label{sec:MainProperties}

It is possible to replace the index pair by a homotopy equivalent one
in the following sense.

\begin{proposition}
\label{prop:compareHtpyEqu}
If the map $r$ in Diagram~\eqref{diagram:induced_map} is a homotopy equivalence,
then so is $r_\iT$.
\end{proposition}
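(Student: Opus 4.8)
The plan is to exploit the suspension semiflow $\phi_\mapP$ on $\Tor(\mapP)$ exactly as in the proof of Theorem~\ref{thm:torus_choice_index_pair}: a homotopy equivalence of base maps does not quite give a strict inverse at the torus level, but it gives an inverse up to the self-maps $\mapP^n_\iT$, and Lemma~\ref{lem:induce_id} says these are homotopic to the identity. Concretely, suppose $r\colon P\to Q$ with $\mapQ r = r\mapP$ is a homotopy equivalence, with homotopy inverse $g\colon Q\to P$. The first step is to replace $g$ by a map that also intertwines the two self-maps. I would check that $\mapP g$ is homotopic to $g\mapQ$: from $rg\simeq \id_Q$ we get $r(\mapP g) = \mapQ(rg)\simeq \mapQ = \mapQ(\id_Q)$, hence $\mapP g\simeq g\mapQ$ after composing with $g$ on the left and using $gr\simeq\id_P$. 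This strict commutation can be arranged up to homotopy, but to apply Diagram~\eqref{diagram:induced_map} one wants it on the nose.

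To get strict commutation, I would use the standard trick of passing to the reparametrized map $g' := \mapP^m \circ g$ for suitable $m$, or better, observe that the mapping torus construction only depends on the homotopy class of the base map (cited already: \citealt{R1}, Prop.~6.1(i), for the reduced case; the same holds unreduced). So I would first establish, or invoke, the lemma that if $r_0\simeq r_1$ as maps $P\to Q$ with $\mapQ r_i = r_i\mapP$, then $(r_0)_\iT\simeq (r_1)_\iT$ — this is the analogue of the cited homotopy-invariance statement and follows by suspending the homotopy into a map $\Tor(\mapP)\times[0,1]\to\Tor(\mapQ)$. Granting this, pick $g$ with $gr\simeq\id_P$, $rg\simeq\id_Q$; then $\mapP g$ and $g\mapQ$ are both homotopic, and by adjusting within the homotopy class we may assume $\mapP g = g\mapQ$ holds (or simply work with the induced maps up to homotopy throughout). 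Now Diagram~\eqref{diagram:induced_map} applied to $g$ produces $g_\iT\colon\Tor(\mapQ)\to\Tor(\mapP)$.

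The final step is the computation, identical in shape to Theorem~\ref{thm:torus_choice_index_pair}: $g_\iT r_\iT = (gr)_\iT$. Since $gr\simeq\id_P$ through maps intertwining $\mapP$ (or after precomposing with a suitable power $\mapP^n$ using Theorem~\ref{thm:propGeneralIndexMaps}-style bookkeeping — but here we do not even need a power, as $gr$ is genuinely homotopic to $\id_P$), homotopy-invariance of the torus construction gives $g_\iT r_\iT = (gr)_\iT \simeq (\id_P)_\iT = \id_{\Tor(\mapP)}$. Symmetrically $r_\iT g_\iT\simeq\id_{\Tor(\mapQ)}$. Hence $r_\iT$ is a homotopy equivalence.

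The main obstacle is the bookkeeping in the first two paragraphs: turning "$r$ is a homotopy equivalence" into a homotopy inverse that is compatible with the self-maps $\mapP,\mapQ$, so that the induced-map construction of Diagram~\eqref{diagram:induced_map} applies. Two clean routes handle this: either (a) prove the homotopy-invariance lemma for $(\,\cdot\,)_\iT$ under intertwining homotopies and then only need $g\mapQ\simeq\mapP g$, or (b) use Lemma~\ref{lem:induce_id} to absorb discrepancies of the form $\mapP^n_\iT$, exactly as in Theorem~\ref{thm:torus_choice_index_pair}. I expect route (a) to be the shortest to write up. Everything after that is a one-line diagram chase.
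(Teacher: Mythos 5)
Your first paragraph is correct: from $\mapQ r = r\mapP$ and a homotopy inverse $g$ of $r$ one does get $\mapP g \simeq g\mapQ$. The gap is the next step, ``by adjusting within the homotopy class we may assume $\mapP g = g\mapQ$ holds.'' That is not something you can assume: you cannot in general replace a homotopy inverse by a homotopic map that \emph{strictly} intertwines $\mapP$ and $\mapQ$ (this is a homotopy-coherence obstruction, not bookkeeping). Your candidate fix $g' = \mapP^m\circ g$ does not help either, since $\mapP g' = \mapP^{m+1}g$ while $g'\mapQ = \mapP^m g\mapQ$, so the condition for strict commutation is unchanged. Consequently the strictly-commutative construction of Diagram~\eqref{diagram:induced_map} does not apply to $g$, and neither does Lemma~\ref{lem:induce_id}: your route~(b) fails outright. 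The final computation $g_\iT r_\iT = (gr)_\iT \simeq (\id_P)_\iT$ also glosses over the point that $gr$ itself does not strictly intertwine $\mapP$ with $\mapP$, so ``$(gr)_\iT$'' is not defined by Diagram~\eqref{diagram:induced_map}.

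Your route~(a) is the correct idea but is left unrealized, and it is precisely what the paper sets up. The right tool is not a homotopy-invariance lemma for the strict $(\cdot)_\iT$, but the induced map $(g,H')_\iT$ for a \emph{homotopy-commutative} square (Diagram~\eqref{eq:htpyCommutativeInduced}), together with Lemma~\ref{lem:connectFn}, which replaces Lemma~\ref{lem:induce_id} in this setting. With that machinery, your data are exactly a shift equivalence with exponent $n=0$: $\mapQ r = r\mapP$, $\mapP s\simeq s\mapQ$ (your computation), $sr\simeq\id_P=\mapP^0$, $rs\simeq\id_Q=\mapQ^0$. The paper's proof is a two-line reduction: note that $r_\iT=(r,H)_\iT$ for the tautological homotopy $H(x,\theta)=[r(x),\theta]$, and invoke Theorem~\ref{thm:shiftEquiHtpyEqui}, whose proof shows that any such $(r,H)_\iT$ is a homotopy equivalence. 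The nontrivial step you would still have to supply, and the real content of that theorem's proof, is showing the composite $(s,H')_\iT\circ(r,H)_\iT$ is homotopic to the identity via an auxiliary two-parameter homotopy and Lemma~\ref{lem:connectFn}; it is not the one-line identity $(gr)_\iT\simeq\id_\iT$ you write.
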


\begin{proof}
Diagram~\eqref{diagram:induced_map} is a special case of Diagram~
\eqref{eq:htpyCommutativeInduced} by putting $H(x,\theta)=[r(x),\theta]$
and then $r_\iT = (r,H)_\iT$.
Theorem~\ref{thm:shiftEquiHtpyEqui} yields the result.
\end{proof}

An important property of the usual Conley index definitions is the invariance
under continuation. 
Consider a collection $\{(S_t,f_t) \mid t \in [0,1]\}$
of sets $S_t \subset X$ and maps $f_t \colon X\to X$,
such that the dynamical system
\begin{align*}
F\colon X\times [0,1] &\to X\times [0,1],\\
(x,t) &\mapsto (f_t(x),t),
\end{align*}
is continuous and the set $\Sigma \subset X \times [0,1]$ given by
\[ \Sigma = \{(x,t) \mid x \in S_t \} \]
is an isolated invariant set for $F$. The collection $\{S_t, f_t\}$ is called
a \emph{continuation from $(S_0,f_0)$ to $(S_1,f_1)$}. Note that
$f_s \simeq f_t$ for all $s,t\in [0,1]$.

\begin{theorem}
If there is a a continuation from $(S_0,f_0)$ to $(S_1,f_1)$,
then $\CT(S_0,f_0)=\CT(S_1,f_1)$.
\end{theorem}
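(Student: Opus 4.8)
The plan is to reduce the continuation statement to the already-proved fact that the mapping torus index depends only on the homotopy class of the index map (Prop.~6.1(i) of \citealt{R1}, invoked after Definition~\ref{defn:mappingTorusIndex}) together with Theorem~\ref{thm:torus_choice_index_pair}. The key point is that an isolated invariant set for the product system $F$ restricts, at each parameter value $t$, to an isolated invariant set for $f_t$, and that an index pair for $\Sigma$ in $X\times[0,1]$ can be sliced to produce an index pair for each $(S_t,f_t)$ whose index map varies continuously in $t$.

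First I would choose an index pair $(\mathcal N,\mathcal L)$ for $(\Sigma,F)$ in $X\times[0,1]$; such a pair exists by the existence theorem for index pairs of \citet{RS} applied to the locally compact metric space $X\times[0,1]$. For $t\in[0,1]$ set $N_t:=\{x\in X\mid (x,t)\in\mathcal N\}$ and $L_t:=\{x\in X\mid (x,t)\in\mathcal L\}$. Because $F$ preserves the $[0,1]$-coordinate, a solution of $F$ in $\mathcal N$ lies in a single slice $X\times\{t\}$ and projects to a solution of $f_t$ in $N_t$; hence $\Inv(\cl(N_t\setminus L_t),f_t)=S_t$ and $S_t\subset\Int(N_t\setminus L_t)$, so the slices are candidate index pairs. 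One must check that $(N_t,L_t)$ is a genuine index pair, i.e.\ that $f_{t,(N_t,L_t)}$ is continuous; this follows from the continuity of $F_{(\mathcal N,\mathcal L)}$ by restricting the open sets $U$ supplied by Lemma~\ref{prop:cont} to the slice $X\times\{t\}$. The slightly delicate point here is whether the sliced pair is compact and whether $\Int(N_t\setminus L_t)$ is taken in $X$ rather than in the slice, but compactness of $\mathcal N$ and $\mathcal L$ gives compactness of the slices, and the neighborhood condition in $X\times[0,1]$ restricts to one in $X$.

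Next, observe that the family of index maps $f_{t,(N_t,L_t)}\colon N_t/L_t\to N_t/L_t$ is, up to the obvious identification of the slices, a single continuous self-map on a bundle over $[0,1]$; since the base is contractible, all fibers $N_t/L_t$ are homotopy equivalent and the self-maps are homotopic to one another through the family. Concretely, after trivializing (or simply using that $N_0/L_0$ and $N_1/L_1$ are related by the restriction of $F_{(\mathcal N,\mathcal L)}$ flowed along $[0,1]$, which gives the homotopy equivalence and a conjugating homotopy), one gets that $f_{0,(N_0,L_0)}$ and $f_{1,(N_1,L_1)}$ are homotopic maps in the sense that there is a homotopy equivalence $\rho\colon N_0/L_0\to N_1/L_1$ with $\rho\circ f_{0,(N_0,L_0)}\simeq f_{1,(N_1,L_1)}\circ\rho$. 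Then Proposition~\ref{prop:compareHtpyEqu} (or directly the argument of Theorem~\ref{thm:torus_choice_index_pair}) shows $\Tor(f_{0,(N_0,L_0)})\simeq\Tor(f_{1,(N_1,L_1)})$, and by Theorem~\ref{thm:torus_choice_index_pair} each side computes $\CT(S_0,f_0)$ and $\CT(S_1,f_1)$ respectively, giving $\CT(S_0,f_0)=\CT(S_1,f_1)$.

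The main obstacle I anticipate is the bookkeeping for the sliced index pair: verifying that $(N_t,L_t)$ is an index pair for every $t$ and, more importantly, producing a \emph{single} homotopy between $f_{0,(N_0,L_0)}$ and a conjugate of $f_{1,(N_1,L_1)}$ rather than merely a $t$-by-$t$ family of homotopy equivalences. The cleanest way around this is probably to avoid slicing altogether and instead use the continuation index pair directly: apply the suspension-semiflow argument (Lemma~\ref{lem:induce_id}) to $F_{(\mathcal N,\mathcal L)}$ itself, whose mapping torus deformation-retracts onto that of each restriction because the $[0,1]$-direction is invariant and contractible. In either formulation the essential input is only that homotopic index maps have homotopy equivalent mapping tori, which is already available, so the theorem follows without new homotopy-theoretic machinery.
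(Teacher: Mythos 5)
Your proposal has a genuine gap at its core step, and the fallback you suggest in the last paragraph does not repair it.

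The central move you rely on — slicing an index pair $(\mathcal N,\mathcal L)$ for $(\Sigma,F)$ at each $t$ to get a pair $(N_t,L_t)$ and then treating the family $\{N_t/L_t\}$ as a ``bundle over $[0,1]$'' so that contractibility of the base forces the fibers to be homotopy equivalent — is not justified. The quotient spaces $N_t/L_t$ need not form a fibration, or even a nice parametrized family, and there is no covering homotopy property available; the slices of $\mathcal N$ and $\mathcal L$ can change topological type as $t$ varies, and after quotienting by $\mathcal L$ the projection to $[0,1]$ no longer even exists (points in different slices are collapsed to a single base point). Moreover, the claim that each slice $(N_t,L_t)$ is itself an index pair for $(S_t,f_t)$ is not automatic: continuity of $f_{t,(N_t,L_t)}$ does not follow directly from continuity of $F_{(\mathcal N,\mathcal L)}$ by simply restricting open sets, because Lemma~\ref{prop:cont} requires open sets in the $X$-direction with specific exit/entry behavior at parameter $t$, which the sets produced for $F$ in $X\times[0,1]$ do not have to provide slice by slice. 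Your proposed alternative — applying a suspension-semiflow retraction to $\Tor(F_{(\mathcal N,\mathcal L)})$ along the $[0,1]$-direction — has the same obstruction: after passing to the quotient $\mathcal N/\mathcal L$, the $[0,1]$-direction is no longer a factor and there is no canonical retraction onto the slice tori.

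The paper avoids all of this by invoking Corollary~5.5 of \citet{RS}, which is exactly the needed tool: it yields an open cover $I_1,\dots,I_n$ of $[0,1]$ and pairs $(N_i,L_i)$ (constant in $t$ on each $I_i$) such that $(N_i,L_i)$ is a single fixed index pair for $(S_t,f_t)$ for all $t\in I_i$. Then for $s,t$ in the same $I_i$ one only needs homotopy invariance of the mapping torus (since $f_s\simeq f_t$), and for $t$ in an overlap $I_i\cap I_j$ one applies Theorem~\ref{thm:torus_choice_index_pair} for a fixed $(S_t,f_t)$; connectedness of $[0,1]$ finishes the argument. Your proposal correctly identifies both of these ingredients (homotopy invariance of $\Tor$ and independence of index pair) but is missing the crucial input that makes them usable: the locally constant choice of index pairs. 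Without it, the ``single homotopy'' concern you flag at the end is not a bookkeeping issue but the actual missing content.
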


\begin{proof}
Let $\{(S_t,f_t) \mid t \in [0,1]\}$ be a continuation of isolated
invariant sets. Then, applying Corollary~5.5 in \citet{RS}, there are
open sets $I_1, \ldots, I_n$ covering the unit interval $[0,1]$
and pairs $(N_1,L_1),\ldots, (N_n,L_n)$ such that each $(N_i, L_i)$
is an index pair when $(S_t,f_t)$ for $t \in I_i$.
We assume $0\in I_1$ and $1\in I_n$.
Now one can observe:
\begin{enumerate}
\item If $s,t\in I_i$, then $\Tor(f_{s,(N_i,L_i)}) \simeq \Tor(f_{t,(N_i,L_i)})$
since $f_s\simeq f_t$.
\item If $t\in I_i\cap I_j$, then 
$(N_i,L_i)$ and $(N_j,L_j)$ are index pairs for $(S_t,f_t)$, hence
Theorem~\ref{thm:torus_choice_index_pair} yields
$\Tor(f_{t,(N_i,L_i)}) \simeq \Tor(f_{t,(N_j,L_j)})$.
\end{enumerate}
Since $[0,1]$ is connected and every $I_i$ is open in $[0,1]$, this shows that
\[ \CT(S_0,f_0)=[\Tor(f_{0,(N_1,L_1)})]=[\Tor(f_{1,(N_n,L_n)})] = \CT(S_1,f_1).\]
\end{proof}

The following result about compositions is an analogue of Theorem~1.12 from \citet{M94}.
\begin{theorem}[Commutativity]
Let $\phi \colon X \to Y$ and $\psi \colon Y \to X$ be continuous maps.
Consider the dynamical systems $f=\psi\phi$ and $g=\phi\psi$. Let
$S \subset X$ be an isolated invariant set for $f$.
Then $\phi(S)$ is an isolated invariant set for $g$ and
$\CT(S,f)=\CT(\phi(S),g)$.
\end{theorem}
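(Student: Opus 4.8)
The plan is to reduce everything to the machinery of Theorem~\ref{thm:torus_choice_index_pair} and Lemma~\ref{lem:induce_id} by producing, from a single index pair on the $X$-side, an index pair on the $Y$-side whose index map is conjugate---up to iterates---to the original one. First I would recall the classical fact (due to \citet{M94} in the setting of that reference's Theorem~1.12) that if $(N,L)$ is an index pair for $(S,f)$ with $f=\psi\phi$, then $(\phi(N),\phi(L))$ need not be an index pair, but after a preliminary reduction one may assume the index pair is chosen so that $\phi$ restricted to it is well-behaved; more robustly, one takes the index pair $(N',L')$ for $(\phi(S),g)$ constructed in the commutativity proof for the discrete Conley index and checks that $\phi$ and $\psi$ induce continuous maps $\phi_\ast\colon N/L\to N'/L'$ and $\psi_\ast\colon N'/L'\to N/L$ (sending points outside the good region to the base point) satisfying $\psi_\ast\phi_\ast = f_{(N,L)}^{k}$ and $\phi_\ast\psi_\ast = g_{(N',L')}^{k}$ for some $k\ge 1$, together with the intertwining relations $\phi_\ast f_{(N,L)} \simeq g_{(N',L')}\phi_\ast$ and $\psi_\ast g_{(N',L')} \simeq f_{(N,L)}\psi_\ast$ (on the nose, up to a shift by iterates). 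This is precisely the input that Theorem~1.12 of \citet{M94} already provides at the level of homotopy classes and shift equivalence.

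Granting such maps, the topological conclusion is then immediate by the same argument as in the proof of Theorem~\ref{thm:torus_choice_index_pair}: put $r := \phi_\ast$, $s := \psi_\ast$, form the induced maps $r_\iT\colon \Tor(f_{(N,L)})\to\Tor(g_{(N',L')})$ and $s_\iT$ in the other direction (using Diagram~\eqref{diagram:induced_map}, which requires exactly the intertwining relations above), and apply Lemma~\ref{lem:induce_id} to the composites $s_\iT r_\iT = (\psi_\ast\phi_\ast)_\iT = (f_{(N,L)}^{k})_\iT \simeq \id$ and likewise $r_\iT s_\iT \simeq \id$. This exhibits $\Tor(f_{(N,L)})\simeq\Tor(g_{(N',L')})$, hence $\CT(S,f)=\CT(\phi(S),g)$. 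The reduced version follows verbatim with reduced mapping tori, since all the maps are pointed.

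One subtlety I would treat carefully is that $\phi$ need not carry solutions in $N\setminus L$ to solutions in the corresponding region on the $Y$-side without a shift in time: a point $x$ whose $f$-orbit stays in the index pair gives a $\phi$-image $\phi(x)$ whose $g$-orbit $g^n(\phi(x)) = \phi(f^n(x))$ stays in $\phi(N)$, but the ``entering'' and ``exiting'' behavior interacts with $L$ only after applying $\phi$, which is why the maps $\phi_\ast,\psi_\ast$ must be defined with a built-in power of $f$ (resp.\ $g$), exactly as $f_{\beta\alpha}$ was defined with the factor $f^{3u+1}$ in Sect.~\ref{sec:defnWellDefined}. Checking continuity of $\phi_\ast$ and $\psi_\ast$ is then a case analysis of the same flavor as the proof of Theorem~\ref{thm:propGeneralIndexMaps}\ref{continuity}, using Lemma~\ref{prop:cont}; I expect this bookkeeping---identifying the correct ``collar'' of orbits that both maps respect---to be the main obstacle, though it is essentially already carried out in \citet{M94} and can be cited.

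An alternative, cleaner route, which I would mention as a remark, is to invoke Theorem~\ref{thm:shiftEquiHtpyEqui}: \citet{M94}'s Theorem~1.12 shows the shift equivalence indices of $(S,f)$ and $(\phi(S),g)$ agree, i.e.\ $f_{(N,L)}$ and $g_{(N',L')}$ are shift equivalent, and shift equivalent maps have homotopy equivalent mapping tori. That immediately gives the unreduced statement; the reduced statement follows since the shift equivalence is through pointed maps. This shortens the proof considerably at the cost of depending on the comparison theorem of Sect.~\ref{sec:shiftEqui} rather than on the self-contained argument via Lemma~\ref{lem:induce_id}.
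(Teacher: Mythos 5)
Your proposal is correct in substance and leans on the same main input as the paper---Theorem~1.12 of \citet{M94}---but the route you take from there is considerably more roundabout than what the paper actually does. The paper observes that Mrozek's construction produces index pairs $(N,L)$, $(M,K)$ and maps $\bar\phi\colon N/L\to M/K$, $\bar\psi\colon M/K\to N/L$ satisfying the \emph{exact} identities $\bar\psi\bar\phi=f_{(N,L)}$ and $\bar\phi\bar\psi=g_{(M,K)}$ (i.e.\ your $k$ is $1$, not a general $k\ge1$, and no iterate-bookkeeping is required), and then concludes in a single step from the general mapping-torus identity $\Tor(\bar\psi\bar\phi)\simeq\Tor(\bar\phi\bar\psi)$ (\citealt{R1}, Prop.~6.1(ii)). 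That one-line fact entirely bypasses the intertwining relations and the machinery of Theorem~\ref{thm:torus_choice_index_pair}/Lemma~\ref{lem:induce_id} that your main argument invokes. Note also that with $k=1$ the intertwining relations you postulate come for free, since $g_{(M,K)}\bar\phi=(\bar\phi\bar\psi)\bar\phi=\bar\phi(\bar\psi\bar\phi)=\bar\phi f_{(N,L)}$ and symmetrically on the other side, so the ``collar of orbits'' bookkeeping you flag as the main obstacle is not needed once Mrozek's theorem is cited as a black box. Your alternative route via Theorem~\ref{thm:shiftEquiHtpyEqui} is also valid and is roughly as short as the paper's argument; the trade-off you describe (self-containedness versus depending on Section~\ref{sec:shiftEqui}) is accurately characterized. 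Overall: correct, but the main argument re-derives a special case of $\Tor(ab)\simeq\Tor(ba)$ via heavier machinery where a direct citation suffices.
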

\begin{proof}
Using the proof of Theorem~1.12 from \citet{M94}, index pairs $(N,L)$
for $(S,f)$ and $(M,K)$ for $(\phi(S),g)$ exist such that there
are continuous maps $\bar{\phi}\colon N/L\to M/K$
and $\bar{\psi}\colon M/K \to N/L$ with $\bar{\psi}\bar{\phi}=f_{(N,L)}$
and $\bar{\phi}\bar{\psi}=g_{(M,K)}$.
Hence, $\CT(S,f) = [\Tor(\bar\psi\bar\phi)] = [\Tor(\bar\phi\bar\psi)]
= \CT(\phi(S),g).$
The equality in the middle is a general property of
mapping tori (see \citealt{R1}, Prop. 6.1(ii)).
\end{proof}

\section{Definition via shift equivalence}
\label{sec:shiftEqui}

Homotopy classes of self-maps
$[\mapP\colon P \to P]$ and $[\mapQ \colon Q \to Q]$ are called \emph{shift equivalent}
if there are continuous maps $r\colon P \to Q$ and $s \colon Q \to P$ such that
$\mapQ r \simeq r \mapP$, $s \mapQ \simeq \mapP s$,
$s r \simeq \mapP^n$ and $r s \simeq \mapQ^n$ for some $n \in \NN$.
Here we call the shift equivalence class of $[f_{(N,L)}]$
the \emph{shift equivalence (Conley) index}. It was introduced
by \citet{FR}.
In this section, we show that the mapping torus index is strictly coarser,
but sometimes allows statements about shift equivalence if $N/L$ is
compact and connected.

Assume we are given maps $\mapP\colon P \to P$ and $\mapQ\colon Q\to Q$ and
a map $r\colon P \to Q$ such that $\mapQ r\simeq r\mapP$. This means
that $\incl_\mapQ r \mapP \simeq \incl_\mapQ \mapQ r \simeq \incl_\mapQ r$.
Hence, there is a homotopy
$H \colon P \times [0,1] \to \Tor(\mapQ)$ with $H(x,0) = \incl_\mapQ r(x)$
and $H(x,1) = \incl_\mapQ r\mapP(x)$ for
all $x \in P$.
Let the induced map
$(r,H)_\iT\colon \Tor(\mapP) \to \Tor(\mapQ)$ be given by
$(r,H)_\iT[x,\theta] = H(x,\theta)$.
This is well-defined because
$H(x,1) = [r\mapP(x),0] = H(\mapP(x),0).$
In the diagram
\begin{equation}
\label{eq:htpyCommutativeInduced}
\begin{tikzcd}
  P \arrow{r}{\mapP} \arrow{d}{r} & P \ar{d}{r} \ar{r}{\incl_\mapP} &
    \Tor(\mapP) \ar[dashed]{d}{(r,H)_\iT}\\
  Q \ar{r}{\mapQ}                 & Q \ar{r}{j_\mapQ} & \Tor(\mapQ),
\end{tikzcd}
\end{equation}
the left square is homotopy commutative and the right square is strictly commutative.
First we observe the following generalization of Lemma~\ref{lem:induce_id}.

\begin{lemma}
\label{lem:connectFn}
In Diagram~\eqref{eq:htpyCommutativeInduced},
assume that $P=Q$, $\mapP=\mapQ$ and $r=\mapP^n$ for some $n\geq 0$.
If $H\colon P \times [0,1]\to\Tor(\mapP)$ is a homotopy from $\incl_\mapP r$ to
$\incl_\mapP r \mapP$, i.e.,
$H(\cdot,0)=\incl_\mapP \mapP^n$ and $H(\cdot,1) = \incl_\mapP \mapP^{n+1}$,
then $(\mapP^n,H)_\iT \simeq \id_{\Tor(\mapP)}$.
\end{lemma}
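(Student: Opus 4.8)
The plan is to reduce Lemma~\ref{lem:connectFn} to the already-proved Lemma~\ref{lem:induce_id} by showing that the two induced maps $(\mapP^n,H)_\iT$ and $\mapP^n_\iT$ are homotopic for \emph{any} choice of homotopy $H$, and then invoking Lemma~\ref{lem:induce_id} to conclude $\mapP^n_\iT \simeq \id_{\Tor(\mapP)}$. So the heart of the matter is the following claim: if $H, H' \colon P \times [0,1] \to \Tor(\mapP)$ are two homotopies from $\incl_\mapP r$ to $\incl_\mapP r\mapP$ with $r = \mapP^n$, then $(\mapP^n, H)_\iT \simeq (\mapP^n,H')_\iT$. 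Taking $H'$ to be the specific homotopy $H'(x,\theta) = [\mapP^n(x),\theta]$ coming from the mapping torus structure — which is indeed a homotopy from $\incl_\mapP \mapP^n$ to $\incl_\mapP \mapP^{n+1}$ since $[\mapP^n(x),1] = [\mapP^{n+1}(x),0]$ — we recover $(\mapP^n,H')_\iT = \mapP^n_\iT$ in the notation of Diagram~\eqref{diagram:induced_map}, and the lemma follows.

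To prove the claim, I would first note that the two homotopies $H$ and $H'$ are themselves homotopic rel endpoints as maps into $\Tor(\mapP)$: indeed, $\incl_\mapP \mapP^n \simeq \incl_\mapP\mapP^{n+1}$ and any two homotopies between a fixed pair of maps into $\Tor(\mapP)$ differ by an element of $\pi_1$ of the mapping space, but more concretely one builds the rel-endpoint homotopy directly. The composite loop $H' * \bar H$ (traverse $H'$ then reverse $H$) based at $\incl_\mapP\mapP^n(x)$ is, for each fixed $x$, a loop in $\Tor(\mapP)$; using the suspension semiflow $\phi_\mapP$ one checks this loop is null-homotopic through a canonical contraction, because running the semiflow for time $n$ carries $\incl_\mapP\mapP^n(x) = [\mapP^n(x),0]$ around the ``$[0,1]$ direction'' in a controlled way. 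This is essentially the same mechanism that makes Lemma~\ref{lem:induce_id} work.

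A cleaner route, which I would prefer to write out, is: the homotopy $K \colon P \times [0,1] \times [0,1] \to \Tor(\mapP)$ given by a suitable reparametrization interpolating between $H$ and $H'$ — using that both agree on $P\times\{0\}$ with $\incl_\mapP\mapP^n$ and on $P\times\{1\}$ with $\incl_\mapP\mapP^{n+1} = \incl_\mapP\mapP^n\circ\mapP$ — descends to a homotopy $\Tor(\mapP) \times [0,1] \to \Tor(\mapP)$ between $(\mapP^n,H)_\iT$ and $(\mapP^n,H')_\iT$, precisely because the gluing relation $[x,1] = [\mapP(x),0]$ is respected at both ends of the square of homotopies. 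Concretely, for $[x,\theta] \in \Tor(\mapP)$ one sets the homotopy value at parameter $s$ to be $K(x,\theta,s)$, and well-definedness at $\theta = 1$ reduces to $K(x,1,s) = K(\mapP(x),0,s)$, which holds because $H(x,1)=H(\mapP(x),0)$ and $H'(x,1)=H'(\mapP(x),0)$ and one chooses $K$ to be ``$H$ then $H'$'' concatenated in the $s$-variable with the natural splice at $\theta$.

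The main obstacle is the bookkeeping in this last step: one must produce $K$ explicitly and verify that the endpoint identifications of the mapping torus are preserved throughout the square $P \times [0,1] \times [0,1]$, not merely at the corners. The cleanest presentation is probably to appeal to the cited fact (\citealt{R1}, Prop.~6.1(i)) that the homotopy type of a mapping torus depends only on the homotopy class of the gluing map, applied to the observation that $(\mapP^n,H)_\iT$ and $\mapP^n_\iT$ both fit into Diagram~\eqref{eq:htpyCommutativeInduced} respectively Diagram~\eqref{diagram:induced_map} with the \emph{same} maps $r = \mapP^n$ and $\mapP = \mapQ$ in the left square, so that their difference is controlled entirely by the choice of filler $H$ versus the canonical one, which is unique up to homotopy. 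Either way, once $(\mapP^n,H)_\iT \simeq \mapP^n_\iT$ is established, Lemma~\ref{lem:induce_id} immediately gives $(\mapP^n,H)_\iT \simeq \mapP^n_\iT \simeq \id_{\Tor(\mapP)}$, completing the proof.
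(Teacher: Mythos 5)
Your proposal takes a genuinely different route from the paper, but its key step is not established and in fact cannot be established in the generality you claim. You want to show $(\mapP^n,H)_\iT \simeq (\mapP^n,H')_\iT$ for \emph{arbitrary} fillers $H,H'$ of the homotopy-commutative left square, and then reduce to Lemma~\ref{lem:induce_id}. But two fillers of a homotopy-commutative square are not unique up to homotopy rel endpoints, and the induced maps on the mapping torus genuinely depend on this choice: already with $P=\{*\}$, $\mapP=\id_P$, $\Tor(\mapP)=S^1$, any loop $H$ at $[*,0]$ is an admissible filler, and $(\mapP^n,H)_\iT\colon S^1\to S^1$ is the self-map whose degree equals the winding number of $H$. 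Distinct winding numbers give non-homotopic induced maps, so your independence claim fails, and with it the reduction. Note also that your independence claim, combined with Lemma~\ref{lem:induce_id}, is logically equivalent to the statement of Lemma~\ref{lem:connectFn} itself; so asserting it without proof is circular rather than a simplification.

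The appeal to Ranicki's Proposition~6.1(i) does not repair this: that result says the homotopy \emph{type} of $\Tor(\mapP)$ depends only on the homotopy class of $\mapP$, which is a statement about the target space, not about comparing two maps into a fixed mapping torus that arise from different fillers. Likewise, your sketched construction of $K\colon P\times[0,1]\times[0,1]\to\Tor(\mapP)$ never actually produces a formula, and the well-definedness at $\theta=1$ versus $(\mapP(x),0)$ is precisely where the argument must do real work rather than bookkeeping. The paper avoids the whole issue by constructing the homotopy $\id_{\Tor(\mapP)}\simeq(\mapP^n,H)_\iT$ directly: for $t\in[0,\tfrac12]$ it pushes $[x,\theta]$ forward along the suspension semiflow $\phi_\mapP$ (by a $\theta$-dependent amount of time) until it reaches $[\mapP^n(x),0]=H(x,0)$, and for $t\in[\tfrac12,1]$ it traverses $H(x,\cdot)$ from $0$ to $\theta$. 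The suspension semiflow is the essential tool here; it is what absorbs the ambiguity in $H$, and it has no counterpart in your reduction strategy.
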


\begin{proof}
A homotopy $H'\colon \Tor(\mapP) \times [0,1] \to \Tor(\mapP)$ is given by:
\[ H'([x,\theta],t) =
  \begin{cases}
	 \phi_\mapP([x,\theta],2t(n-\theta)) &\text{ for } 0 \leq t\leq 1/2, \\
	 H(x,(2t-1)\theta) &\text{ for } 1/2 \leq t \leq 1.
  \end{cases}
\]
Then $H'([x,\theta],0) = [x,\theta]$ and $H'([x,\theta],1) = H(x,\theta) = (\mapP^n,H)_\iT[x,\theta]$.
\end{proof}

Lemma~\ref{lem:induce_id} is the special case 
$H(x,\theta)=\phi_\mapP([\mapP^n(x),0],\theta)$
of Lemma~\ref{lem:connectFn}.

\begin{theorem}
\label{thm:shiftEquiHtpyEqui}
If the homotopy classes $[\mapP]$ and $[\mapQ]$ are shift equivalent,
then $\Tor(\mapP)$ and $\Tor(\mapQ)$ are homotopy equivalent.
\end{theorem}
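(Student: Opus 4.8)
The plan is to unwind the definition of shift equivalence and assemble a homotopy equivalence from the induced maps on mapping tori, in direct analogy with the proof of Theorem~\ref{thm:torus_choice_index_pair}. Suppose $[\mapP]$ and $[\mapQ]$ are shift equivalent via $r\colon P\to Q$ and $s\colon Q\to P$ with $\mapQ r\simeq r\mapP$, $s\mapQ\simeq \mapP s$, $sr\simeq\mapP^n$ and $rs\simeq\mapQ^n$. The homotopy $\mapQ r\simeq r\mapP$ gives, via the argument preceding Diagram~\eqref{eq:htpyCommutativeInduced}, a homotopy $H\colon P\times[0,1]\to\Tor(\mapQ)$ and hence an induced map $(r,H)_\iT\colon\Tor(\mapP)\to\Tor(\mapQ)$; symmetrically, $s\mapQ\simeq\mapP s$ yields $(s,G)_\iT\colon\Tor(\mapQ)\to\Tor(\mapP)$ for a suitable $G$. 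The candidate homotopy inverses are $(r,H)_\iT$ and $(s,G)_\iT$.

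The key step is to show that the composite $(s,G)_\iT\circ(r,H)_\iT$ is homotopic to $\id_{\Tor(\mapP)}$, and likewise on the other side. First I would observe that a composite of induced maps is again an induced map of the same type: $(s,G)_\iT\circ(r,H)_\iT = (sr, K)_\iT$ for some homotopy $K$ from $\incl_\mapP(sr)$ to $\incl_\mapP(sr)\mapP$, obtained by concatenating $G$ with the image of $H$ under the chain map. Next, since $sr\simeq\mapP^n$, I would want to replace $(sr,K)_\iT$ by an induced map of the form $(\mapP^n,H')_\iT$ where $H'$ is a homotopy from $\incl_\mapP\mapP^n$ to $\incl_\mapP\mapP^{n+1}$; the point is that a homotopy $sr\simeq\mapP^n$ induces a homotopy between the corresponding induced maps on mapping tori, which is a general fact about the construction $(\cdot,\cdot)_\iT$ (the right square of Diagram~\eqref{eq:htpyCommutativeInduced} is strictly commutative and the induced map depends only on the homotopy class of the data). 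Once in that form, Lemma~\ref{lem:connectFn} applies directly and gives $(\mapP^n,H')_\iT\simeq\id_{\Tor(\mapP)}$. The same argument with the roles of $P,Q$ and $r,s$ swapped handles $(r,H)_\iT\circ(s,G)_\iT\simeq\id_{\Tor(\mapQ)}$, using $rs\simeq\mapQ^n$.

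The main obstacle I expect is the bookkeeping in the previous paragraph: making precise that the induced map $(r,H)_\iT$ depends, up to homotopy, only on the homotopy class of $r$ together with the homotopy class of $H$ rel endpoints, and that composition of induced maps corresponds to an appropriate composition/concatenation of the homotopy data. This is morally the statement that $\Tor$ is a functor on a suitable homotopy-coherent category, but since we only need a few specific instances, I would prove exactly what is needed: (a) if $r_0\simeq r_1$ and the homotopies $H_0,H_1$ are compatible with a homotopy of homotopies, then $(r_0,H_0)_\iT\simeq(r_1,H_1)_\iT$; and (b) the explicit formula for $K$ realizing $(s,G)_\iT\circ(r,H)_\iT=(sr,K)_\iT$. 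Neither is deep, but care is needed so that all the concatenated homotopies land where claimed and Lemma~\ref{lem:connectFn} can be invoked verbatim. With those two routine verifications in hand, the proof is assembled exactly as in Theorem~\ref{thm:torus_choice_index_pair}.
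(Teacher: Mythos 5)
Your proposal is correct and follows essentially the same route as the paper's proof: factor the composite $(s,G)_\iT\circ(r,H)_\iT$ as an induced map, use the given homotopy $sr\simeq\mapP^n$ to pass to an induced map of the form $(\mapP^n,H')_\iT$, and then invoke Lemma~\ref{lem:connectFn}. The step you defer as routine bookkeeping (producing a valid $H'$ together with a homotopy to it from the composite) is carried out in the paper in one stroke by building a two-parameter family $F\colon P\times[0,1]\times[0,1]\to\Tor(\mapP)$ from a retraction of the square onto three of its edges, with the composite on one edge and $\incl_\mapP\circ L$ on the other two, where $L$ is the homotopy $sr\simeq\mapP^n$.
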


\begin{proof}
This works similarly to the proof of Theorem~\ref{thm:torus_choice_index_pair}.
But here the induced maps depend on the chosen homotopies.
By assumption,
there are a homotopy $H\colon P\times[0,1]\to\Tor(\mapQ)$ such that
$H(x,0) = \incl_\mapQ r(x)$ and $H(x,1) = \incl_\mapQ r\mapP(x)$ for all $x\in P$,
and a homotopy
$H' \colon Q \times [0,1] \to \Tor(\mapP)$ such that
$H'(x,0) = \incl_\mapP s(x)$ and $H'(x,1) = \incl_\mapP s\mapQ(x)$ for all $x \in Q$.
We show that the composition $K=(s,H')_\iT \circ (r,H)_\iT$ is homotopic to the
identity on $\Tor(\mapP)$.

Since $[\mapP]$ and $[\mapQ]$ are shift equivalent, there is a homotopy
$L \colon P \times [0,1] \to P$ together with an $n\in\NN$ such that
$L(x,0) = sr(x)$ and $L(x,1)=\mapP^n(x)$ for
all $x\in P$.

Using a retraction from the square $[0,1] \times [0,1]$
to three of its boundary edges, there is a map
\[ F\colon P\times [0,1]\times[0,1] \to \Tor(\mapP) \]
such that for all $\theta\in [0,1]$ and $t \in [0,1]$:
\begin{enumerate}[label=(\roman*)]
\item $F(x,\theta,0) = K([x,\theta])$,
\item $F(x,0,t) = \incl_\mapP L(x,t)$, and
\item $F(x,1,t) = \incl_\mapP L(\mapP(x),t)$.
\end{enumerate}
Note that $F$ is well-defined, e.g.,
\[K([x,1]) = K([\mapP(x),0]) = [sr\mapP(x),0]=\incl_\mapP L (\mapP(x),0).\]
Additionally, $F(x,1,t)=F(\mapP(x),0,t)$. This means that
$F$ induces a continuous map $F'\colon \Tor(\mapP) \times [0,1] \to \Tor(\mapP)$.
Let
\begin{align*}
K' \colon P\times [0,1] &\to \Tor(\mapP),\\
(x,\theta) &\mapsto F'([x,\theta],1).
\end{align*}
Observe that $K'(x,0) = \incl_\mapP \mapP^n$ and $K'(x,1) = \incl_\mapP \mapP^{n+1}$
by construction. We get the following homotopies,
where the first one is given by $F'$ and
the second one by Lemma~\ref{lem:connectFn}:
\[ (s,H')_\iT \circ (r,H)_\iT = K \simeq (\mapP^n,K')_\iT \simeq \id_{\Tor(\mapP)}.\]
An analogous argument shows that $(r,H)_\iT\circ(s,H')_\iT \simeq \id_{\Tor(\mapQ)}$.
\end{proof}

The converse can easily be shown to be false.
For example, if $P=\{1\}$, $Q=\{1,2\}$ with the 
discrete topology and $\mapQ\colon Q\to Q, \mapQ(1)=2, \mapQ(2)=1$.
Then $\Tor(\mapP)=\Tor(\mapQ)=S^1$.
Suppose there is a  map $r\colon P \to Q$ such that $\mapQ r \simeq r \mapP$.
Then $\mapQ r(1)=r\mapP(1) = r(1)$, but $\mapQ(x) \neq x$ for all $x$. A contradiction.

The rest of this section deals with a specific situation in which the converse is true,
as described in Theorem~\ref{thm:htpyEquToShift}.
As a tool in the following proof, we use the \emph{mapping telescope}
(see also \citealt{H}, Sect. 3.F):
Let $P$ be a topological space and let $\mapP \colon P \to P$ be a continuous map.
Then let
\[ \Tel(\mapP) = \frac{\coprod_{i\in\ZZ}(P\times [0,1] \times \{i\})}
   {(x,1,i) \sim (\mapP(x),0,i+1)},\]
i.e., countably many mapping cylinders of $\mapP$ are glued together.
It is a covering space of $\Tor(\mapP)$ via
\[ \pi \colon \Tel(\mapP) \to \Tor(\mapP),\ (x,t,i) \mapsto (x,t) \text{ for }t\in [0,1).\]
For $n\in\NN$, let
\[\Tel_n(\mapP)=\left(\coprod_{i=-n}^{n-1} (P \times I \times \{i\}) \coprod P\times\{0\}\times\{n\}\right)
\mathlarger{\mathlarger{/}}{\sim}, \]
using the same identifications as in $\Tel(\mapP)$.
Then $\Tel_n(\mapP)$ deformation retracts to $P$ along a lift of the suspension
semiflow $\phi_\mapP$
via a map $\rho \colon \Tel_n(\mapP) \to P$.
Note that any compact subset of $\Tel(\mapP)$ is contained in $\Tel_n(\mapP)$ for
some $n\in\NN$. We first show two lemmas.

\begin{lemma}
\label{lem:htpy-exists}
Let $\mapP P\to P$ be a continuous map on a compact and connected topological space $P$,
and let $\alpha \colon P \to P$ be a continuous map such that $\alpha \mapP \simeq \mapP \alpha$.
Hence, there is a homotopy $H\colon P\times[0,1]\to\Tor(\mapP)$ such that
$H(\cdot,0) = \incl_\mapP \alpha$ and $H(\cdot,1) = \incl_\mapP \alpha\mapP$.
If $(\alpha,H)_\iT \simeq \id_{\Tor(\mapP)}$,
then there are $n,k\in\NN$ such that $\mapP^n \simeq\mapP^k \alpha$.
\end{lemma}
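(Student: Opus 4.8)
The plan is to exploit the covering space $\pi\colon \Tel(\mapP) \to \Tor(\mapP)$ together with the fact that $(\alpha,H)_\iT \simeq \id_{\Tor(\mapP)}$ to lift the map $\incl_\mapP\alpha\colon P \to \Tor(\mapP)$ to the infinite telescope, and then to use compactness of $P$ to force the image into a finite stage $\Tel_n(\mapP)$. Concretely, first I would note that since $(\alpha,H)_\iT$ is homotopic to the identity, the composite $(\alpha,H)_\iT \circ \incl_\mapP = H(\cdot,0) = \incl_\mapP\alpha$ is homotopic (as a map $P \to \Tor(\mapP)$) to $\incl_\mapP$ itself; in particular $\incl_\mapP\alpha \simeq \incl_\mapP$, and $\incl_\mapP$ lifts to $\Tel(\mapP)$ (e.g. as $x \mapsto (x,0,0)$), which is null-homotopic-adjacent data rather than a genuine obstruction. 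The point is that the homotopy $(\alpha,H)_\iT \simeq \id$ evaluated along $\incl_\mapP$ gives a path of maps $P \to \Tor(\mapP)$ from $\incl_\mapP\alpha$ to $\incl_\mapP$, and lifting this homotopy through the covering $\pi$ (using the homotopy lifting property, with a chosen lift of $\incl_\mapP\alpha$) produces a lift $\widetilde{h}\colon P \to \Tel(\mapP)$ of $\incl_\mapP$ whose starting point is a prescribed lift of $\incl_\mapP\alpha$.

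Next I would pin down a convenient lift of $\incl_\mapP\alpha$: the map $x \mapsto (\alpha(x),0,0) \in \Tel(\mapP)$ covers $\incl_\mapP\alpha$. Running the lifted homotopy from this starting map, I obtain a lift $\widetilde{h}\colon P \to \Tel(\mapP)$ of $\incl_\mapP$. Since $P$ is compact, $\widetilde{h}(P)$ lies in some finite stage $\Tel_n(\mapP)$, and likewise we may arrange (by compactness of $P \times [0,1]$) that the entire lifted homotopy stays in $\Tel_n(\mapP)$ for $n$ large enough, so that both $x \mapsto (\alpha(x),0,0)$ and $\widetilde{h}$ land in $\Tel_n(\mapP)$ and are homotopic there. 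Now I apply the deformation retraction $\rho\colon \Tel_n(\mapP) \to P$ (along the lifted suspension semiflow). Composing with $\rho$: the map $\rho(\alpha(x),0,0)$ equals $\mapP^{n}\alpha(x)$ up to the retraction's bookkeeping (the semiflow pushes stage $0$ down to stage $n$, applying $\mapP$ at each stage boundary, i.e. $n$ times), while $\rho\circ\widetilde{h}$ sends $x$ at stage $0$, position $0$, to $\mapP^{n}(x)$ — except that $\widetilde{h}$ need not land at position $0$ of stage $0$; in general $\widetilde{h}(x)$ sits at some stage $\leq n$, and after retracting we land at $\mapP^{k}(x)$ for the appropriate $k \le n$ determined by which stage $\widetilde h(x)$ occupies (this $k$ is constant in $x$ since $P$ is connected and the stage is locally constant). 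Since $\widetilde{h}$ and $x\mapsto(\alpha(x),0,0)$ are homotopic in $\Tel_n(\mapP)$, applying $\rho$ gives $\mapP^{n}\alpha \simeq \mapP^{k}$, which is the claim after renaming.

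The main obstacle I expect is the careful bookkeeping of stages and positions in $\Tel_n(\mapP)$ needed to identify $\rho\circ(\text{lift})$ with an honest power of $\mapP$: one has to check that the deformation retraction $\rho$ really does implement "apply $\mapP$ once per stage crossed," and that the stage occupied by the lift of $\incl_\mapP$ is indeed constant (using connectedness of $P$) so that a single exponent $k$ works. A secondary technical point is ensuring the homotopy, not just the endpoint maps, can be pushed into a single finite stage $\Tel_n(\mapP)$ — this uses that $P\times[0,1]$ is compact and that the $\Tel_n(\mapP)$ exhaust $\Tel(\mapP)$. Once these are handled, the conclusion $\mapP^{n}\simeq \mapP^{k}\alpha$ is immediate; one may of course need $n \geq k$, and swapping the roles recovers the statement as written (with the labels $n,k$ possibly interchanged).
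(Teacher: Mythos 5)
Your proposal is correct and follows essentially the same route as the paper: lift the homotopy $\incl_\mapP \simeq \incl_\mapP\alpha$ through the covering $\pi\colon\Tel(\mapP)\to\Tor(\mapP)$, use compactness of $P\times[0,1]$ to land in a finite stage $\Tel_n(\mapP)$, apply the retraction $\rho$ along the lifted semiflow, and use connectedness of $P$ to pin down a single shift $k$. The only difference is that you lift starting from $\incl_\mapP\alpha$ (via $x\mapsto(\alpha(x),0,0)$) while the paper lifts starting from $\incl_\mapP$ (via $x\mapsto(x,0,0)$); this just interchanges the roles of $n$ and $k$ in the conclusion, as you note.
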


\begin{proof}
The assumptions yield that $(\alpha,H)_\iT \simeq \id_{\Tor(\mapP)}$, hence
\[\incl_\mapP = \id_{\Tor(\mapP)} \incl_\mapP \simeq (\alpha,H)_\iT \incl_\mapP  = \incl_\mapP \alpha,\]
i.e., there is
a homotopy $H' \colon P \times [0,1] \to \Tor(\mapP)$ such that $H'(\cdot,0) = \incl_\mapP$
and $H'(\cdot,1)=\incl_\mapP\alpha$.
The map $H'$ fits into the following commutative diagram
with solid arrows:
\[ \begin{tikzcd}
  P\times \{0\} \ar[hook]{r}{i_0} \ar[hook]{d} &\Tel(\mapP) \ar{d}{\pi} \\
  P\times [0,1] \ar{r}{H'} \ar[dashed]{ur}{\overline{H}}& \Tor(\mapP),
\end{tikzcd} \]
where $i_0(x,0) = (x,0,0).$
The dashed lift $\overline{H}$ exists and is the unique
map making the diagram commute
because $\pi$ is a covering projection
(\citealt{H}, Prop. 1.30).
Since the domain of $\overline{H}$ is compact, its image is a compact
subset of $\Tel(\mapP)$, hence there is an $n \in \NN$ such that
$\overline{H}=i\circ h$,
where $h\colon P\times [0,1] \to \Tel_n(\mapP)$ and
$i\colon\Tel_n(\mapP) \to \Tel(\mapP)$ is the inclusion.
Let $\rho\colon \Tel_n(\mapP) \to P$ be the deformation retraction
from following the flow lines of the suspension semiflow.
This yields a map
$\rho h \colon P\times [0,1] \to P$ with $\rho h(x,0) = \rho(x,0,0) = \mapP^n(x)$.
Since $\pi h(x,1) = H(x,1) = [\alpha(x),0]$, the image $h(P\times \{1\})$
is in the fiber $\coprod_{i=-n}^n P\times\{0\}\times\{i\}\subset\Tel_n(\mapP)$
over $P\times \{0\}\subset \Tor(\mapP)$.
Since $P$ is connected, 
$h(P\times \{1\}) \subset P\times \{0\} \times \{n-k\}$ for some $k \in \{0,\ldots,2n\}$.
Therefore $\rho h(x,1)=\mapP^k(\alpha(x))$,
and $\rho h$ is a homotopy $\mapP^n \simeq \mapP^k\alpha$.
\end{proof}

Now we show that the exponents in the definition of shift equivalence are allowed
to differ in the following sense.
\begin{lemma}
\label{lem:criterion-shift-equ}
Let $\mapP\colon P \to P$ and $\mapQ \colon Q \to Q$ be continuous maps, and assume
there are $r\colon P \to Q$ and $s\colon Q\to P$ such that
$r\mapP \simeq \mapQ r$, $\mapP s \simeq s\mapQ$.
Assume also that there are $n,m \in \NN$ such that
$sr \simeq \mapP^n$ and $rs \simeq \mapQ^m$. Then $[\mapP]$ and $[\mapQ]$ are shift equivalent.
\end{lemma}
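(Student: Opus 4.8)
The plan is to reduce the asymmetric situation (exponents $n$ and $m$) to the symmetric definition of shift equivalence by absorbing the discrepancy into the maps $r$ and $s$. Without loss of generality assume $n \le m$; the goal is to replace $s$ by $s' := \mapP^{m-n} s$ (equivalently, one could push the extra power onto $r$) so that the two "round-trip" exponents agree. First I would record that $s'$ still satisfies the commuting relation: $\mapP s' = \mapP \mapP^{m-n} s = \mapP^{m-n}\mapP s \simeq \mapP^{m-n} s \mapQ = s' \mapQ$, using $\mapP s \simeq s\mapQ$ and that $\mapP^{m-n}$ commutes with itself. Next I would compute the two compositions with the new pair $(r, s')$: on the $Q$ side, $r s' = r \mapP^{m-n} s \simeq \mapQ^{m-n} r s \simeq \mapQ^{m-n}\mapQ^m = \mapQ^{2m-n}$, using $r\mapP \simeq \mapQ r$ iterated $m-n$ times; on the $P$ side, $s' r = \mapP^{m-n} s r \simeq \mapP^{m-n}\mapP^n = \mapP^m$. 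So with the pair $(r, s')$ we get $s' r \simeq \mapP^m$ and $r s' \simeq \mapQ^{2m-n}$ — still not equal exponents, so a single adjustment is not enough.

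The fix is to iterate: having made both exponents at least $m$, apply the same trick once more on the other side, or more cleanly, choose the new exponent on both sides to be a common value from the start. The clean version: set $N := m$ and $s' := \mapP^{m-n} s$ as above, giving $s'r \simeq \mapP^N$; then separately set $r' := \mapQ^{?} r$ or instead re-balance by noting $r s' \simeq \mapQ^{2m-n}$ and $s' r \simeq \mapQ^{m}$ composed appropriately. Actually the most economical route is: replace $(r,s)$ by $(r, \mapP^{m-n}s)$ to get exponents $(m, 2m-n)$ on sides $(P,Q)$; then replace $r$ by $\mapQ^{(2m-n)-m}r = \mapQ^{m-n}r$ and check this lands both sides at exponent $2m-n$. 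Iterating this balancing a finite number of times (each step strictly reducing the gap, or terminating) yields a pair $(\tilde r, \tilde s)$ with $\tilde s \tilde r \simeq \mapP^{K}$ and $\tilde r \tilde s \simeq \mapQ^{K}$ for a common $K \in \NN$, together with $\tilde r \mapP \simeq \mapQ \tilde r$ and $\mapP \tilde s \simeq \tilde s \mapQ$ — which is exactly the definition of shift equivalence for $[\mapP]$ and $[\mapQ]$.

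I expect the only real subtlety to be bookkeeping: verifying that at each replacement step the two commuting relations $r\mapP \simeq \mapQ r$ and $\mapP s \simeq s\mapQ$ are preserved (they are, since pre- or post-composing with powers of $\mapP$ or $\mapQ$ respects the intertwining, using that $\mapP$ commutes with $\mapP^j$ and $\mapQ$ with $\mapQ^j$), and that the rebalancing terminates. Since every composition relation here is an honest homotopy (not equality), I would be careful to only use that homotopy is compatible with composition, i.e.\ $u \simeq v$ implies $wu \simeq wv$ and $uw \simeq vw$, which is standard. No topology of $P$ or $Q$ is needed for this lemma — it is purely formal manipulation in the homotopy category — so unlike Lemma~\ref{lem:htpy-exists} there is no compactness or connectedness hypothesis, and indeed none is assumed.
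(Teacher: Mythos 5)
The proposal has a genuine gap: the iteration you describe does not terminate, because replacing $s$ by $\kappa^{m-n}s$ (or $r$ by $\lambda^{m-n}r$) leaves the difference between the two round-trip exponents unchanged. You can see this directly from your own computations: step one produces exponents $(m,\,2m-n)$, whose difference is still $m-n$; step two (replacing $r$ by $\lambda^{m-n}r$) produces $(2m-n,\,3m-2n)$, and again $(3m-2n)-(2m-n)=m-n$. This is not an accident of bookkeeping. If $sr\simeq\kappa^{a}$ and $rs\simeq\lambda^{b}$, then passing to $\kappa^{c}s$ (or $s\lambda^{c}$, or $\lambda^{d}r$, or $r\kappa^{d}$) always adds the \emph{same} amount to both exponents, since the power gets intertwined across once in each of the two round trips. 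So the gap $b-a$ is an invariant of every move you allow yourself, and no sequence of such moves can close it. The claim that ``each step strictly reduc[es] the gap'' is simply false, and the proof collapses at that point.

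To fix the argument you must modify $r$ or $s$ by something involving the round-trip maps $sr$ or $rs$ themselves, not merely by powers of $\kappa$ and $\lambda$; those maps carry both $n$ and $m$ into each side and can therefore rebalance. For instance, keep $r'=r$ and take $s'=srs$. Then $s'r'=s(rs)r\simeq s\lambda^{m}r\simeq\kappa^{m}sr\simeq\kappa^{m+n}$, while $r's'=r(sr)s\simeq r\kappa^{n}s\simeq\lambda^{n}rs\simeq\lambda^{n+m}$, and the intertwining relations for $r'$, $s'$ are immediate. This gives shift equivalence with common exponent $n+m$. The paper's proof is in the same spirit but packages the computation into a single large homotopy-commutative diagram, producing the longer maps $\tilde r=rsrsr$ and $\tilde s=sr\kappa^{n}srs$ with common exponent $3(n+m)$; either route works, but your proposed route of pure $\kappa$- and $\lambda$-power adjustments cannot.
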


\begin{proof}
The assumptions yield a homotopy commutative diagram
\[
  \begin{tikzcd}
	P \ar{dd}[swap]{\mapP^{n+m}} \ar{r}{sr}      & P \ar{r}{r} \ar{d}{\mapP^n} \ar{ddl}{sr}
	  & Q \ar{r}{rs} \ar{dl}{s} \ar{d}{\mapQ^m} & Q \ar{dd}{\mapQ^{n+m}} \ar{ddl}{rs} \\
	& P \ar{d}{\mapP^n} \ar{r}{r} & Q \ar[d,"\mapQ^m" near start] \ar{dl}{s} \\ 
	P \ar{r}{sr}      & P \ar{r}{r} & Q \ar{r}{rs} & Q.
  \end{tikzcd}
\]
We do not distinguish the upper and lower row (alternatively, one
can think of three copies of this diagram ``glued'' together).
There are several paths from the left $P$ to itself.
First going along the horizontal composition $rsrsr$ and 
then to the lower left, we get the homotopy
\[ (sr \mapP^n srs)(rsrsr) \simeq \mapP^{3(n+m)}. \]
In a similar manner, starting from the upper right $Q$, one sees
\[ (rsrsr)(sr \mapP^n srs) \simeq \mapQ^{3(n+m)}.\]
Hence, $[\mapP]$ and $[\mapQ]$ are shift equivalent.
\end{proof}

\begin{theorem}
\label{thm:htpyEquToShift}
Let $\mapP\colon P \to P$, $\mapQ\colon Q \to Q$, $r \colon P \to Q$
and $s \colon Q\to P$
such that $r\mapP\simeq \mapQ r$ and $s\mapQ \simeq \mapP s$.
Now suppose that $(sr)_\iT \simeq \id_{\Tor(\mapP)}$
and $(rs)_\iT \simeq \id_{\Tor(\mapQ)}$
and that $P$ and $Q$ are compact and connected. Then $[\mapP]$ and $[\mapQ]$
are shift equivalent.
\end{theorem}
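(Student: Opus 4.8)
The plan is to reduce Theorem~\ref{thm:htpyEquToShift} to Lemma~\ref{lem:criterion-shift-equ} by producing, out of the two homotopy equivalences $(sr)_\iT \simeq \id_{\Tor(\mapP)}$ and $(rs)_\iT \simeq \id_{\Tor(\mapQ)}$, homotopies $sr \simeq \mapP^n$ and $rs \simeq \mapQ^m$ for suitable $n,m \in \NN$. The natural bridge is Lemma~\ref{lem:htpy-exists}, which is precisely an engine for converting a statement of the form ``$(\alpha,H)_\iT \simeq \id$'' into ``$\mapP^n \simeq \mapP^k\alpha$''; so the core of the argument is to arrange the hypotheses of that lemma with $\alpha = sr$ on the $P$ side and $\alpha = rs$ on the $Q$ side.

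First I would check that $sr$ and $rs$ commute up to homotopy with $\mapP$ and $\mapQ$ respectively: from $r\mapP \simeq \mapQ r$ and $s\mapQ \simeq \mapP s$ we get $sr\mapP \simeq s\mapQ r \simeq \mapP sr$, and symmetrically $rs\mapQ \simeq \mapQ rs$. This is exactly the commutativity hypothesis $\alpha\mapP \simeq \mapP\alpha$ needed in Lemma~\ref{lem:htpy-exists}. Consequently there is a homotopy $H \colon P \times [0,1] \to \Tor(\mapP)$ from $\incl_\mapP (sr)$ to $\incl_\mapP (sr)\mapP$, and the induced map $(sr, H)_\iT$ agrees with $(sr)_\iT$ on homotopy type (one may take $H(x,\theta) = \phi_\mapP([sr(x),0],\theta)$, as in the remark following Lemma~\ref{lem:connectFn}, so that $(sr,H)_\iT = (sr)_\iT$); thus the hypothesis $(sr)_\iT \simeq \id_{\Tor(\mapP)}$ feeds directly into Lemma~\ref{lem:htpy-exists}. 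Applying it (using that $P$ is compact and connected) yields $n_0, k_0 \in \NN$ with $\mapP^{n_0} \simeq \mapP^{k_0}(sr)$. Running the same argument on the $Q$ side with $\alpha = rs$ gives $m_0, l_0 \in \NN$ with $\mapQ^{m_0} \simeq \mapQ^{l_0}(rs)$.

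Next I would absorb the extra exponents $k_0, l_0$ by replacing $r$ and $s$ with composites involving powers of $\mapP$ and $\mapQ$. Set $r' := \mapQ^{k_0}\, r$ and $s' := s$, say; then $r'\mapP \simeq \mapQ r'$ and $s\mapQ \simeq \mapP s$ still hold (powers of $\mapP,\mapQ$ commute with everything up to homotopy, by the displayed identities), and $s'r' = s\mapQ^{k_0} r \simeq \mapP^{k_0} sr \simeq \mapP^{k_0}\mapP^{-k_0}\mapP^{n_0}$ — more carefully, $s\mapQ^{k_0} r \simeq \mapP^{k_0}(sr) \simeq \mapP^{n_0}$, a genuine power of $\mapP$, with no leftover factor. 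One checks similarly that $r's' = \mapQ^{k_0} r s \simeq \mapQ^{k_0}\mapQ^{-l_0}\mapQ^{m_0}$; to make this a clean power of $\mapQ$ one should instead symmetrize, e.g. replace $r$ by $\mapQ^{k_0} r$ and $s$ by $\mapP^{l_0} s$, or simply post-compose once more. After such bookkeeping we obtain $r'', s''$ with $r''\mapP \simeq \mapQ r''$, $s''\mapQ \simeq \mapP s''$, $s''r'' \simeq \mapP^n$ and $r''s'' \simeq \mapQ^m$ for some $n, m \in \NN$ (possibly with $n \neq m$). Now Lemma~\ref{lem:criterion-shift-equ} applies verbatim and concludes that $[\mapP]$ and $[\mapQ]$ are shift equivalent, which is the assertion.

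The step I expect to be the main obstacle is the exponent-matching bookkeeping in the previous paragraph: one must keep careful track of which side each power of $\mapP$ or $\mapQ$ sits on, and verify that after inserting powers one still has \emph{both} ``$s''r'' \simeq$ (power of $\mapP$)'' and ``$r''s'' \simeq$ (power of $\mapQ$)'' rather than only one of them — it is easy to fix up one composite while spoiling the other. The cleanest route is probably to apply Lemma~\ref{lem:htpy-exists} in a more symmetric way: note $(sr)_\iT\simeq\id$ also gives, via the argument inside Lemma~\ref{lem:htpy-exists}, a homotopy $\incl_\mapP \simeq \incl_\mapP(sr)$ and hence freedom to choose the exponent $k_0$; and similarly on the $Q$ side. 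By choosing $k_0$ and $l_0$ compatibly (or by passing to a common multiple and using $\mapP^a \simeq \mapP^b (sr)$ for a whole cofinal set of pairs) one arranges both leftover factors to vanish simultaneously. Once that is done everything else is a direct invocation of the two lemmas, and the hypotheses ``$P$ and $Q$ compact and connected'' are used exactly once each, to feed Lemma~\ref{lem:htpy-exists}.
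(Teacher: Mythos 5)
Your proposal is correct and follows essentially the same route as the paper: apply Lemma~\ref{lem:htpy-exists} with $\alpha=sr$ and $\alpha=rs$, absorb the resulting extra powers of $\mapP$ and $\mapQ$ into the connecting maps, and invoke Lemma~\ref{lem:criterion-shift-equ}. The only cosmetic difference is in the exponent bookkeeping: the paper aligns the two exponents to a common $k$ (permissible since the conclusion of Lemma~\ref{lem:htpy-exists} survives increasing $k$) and then replaces only $s$ by $\mapP^k s$, whereas you propose the symmetric replacement $r\mapsto\mapQ^{k_0}r$, $s\mapsto\mapP^{l_0}s$ — both work and land on the same invocation of Lemma~\ref{lem:criterion-shift-equ}.
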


\begin{proof}
Applying Lemma~\ref{lem:htpy-exists} to $\alpha=rs$ and $\alpha=sr$, respectively,
we see that there are $k,n,m \in \NN$ such that
$\mapP^n \simeq \mapP^k sr$ and $\mapQ^m \simeq \mapQ^k rs$. Hence the following diagram
commutes up to homotopy.
\[
  \begin{tikzcd}[row sep=large, column sep=large]
	P \ar{r}{\mapP^n} \ar{d}{r}      & P \ar{d}{r} \\
	Q \ar{r}{\mapQ^m} \ar{ur}{\mapP^k s} & Q
  \end{tikzcd}
\]
Now the result follows from Lemma~\ref{lem:criterion-shift-equ}.
\end{proof}

\section{Examples}
\label{sec:examples}

In order to discuss examples, we first recall the following results
about the homology and the fundamental group of mapping tori.

The following statement about homology is shown in \citet{H},
Example~2.48:
A continuous map $\mapP\colon P \to P$
induces a map $\mapP_*$ in homology and
this fits into a long exact sequence in homology.
\[ \begin{tikzcd}[]
\cdots \ar{r} & \Ho_n(P) \ar{r}{\id-\mapP_*} & \Ho_n(P)\ar{r}{{j_\mapP}_*}
  & \Ho_n(\Tor(\mapP)) \ar{r}{\partial} & \Ho_{n-1}(P) \ar{r} & \cdots
\end{tikzcd} \]

In the following examples, the spaces $P$ are finite wedges of circles
and $\mapP$ sends the base point to the base point.
Computing the fundamental group in this case works as follows.
Let $P=\bigvee_{i=1}^n S^1$. Each of the circles contributes a generator $a_i\in\pi_1(P)$
and $\pi_1(P) = \langle a_1,\ldots, a_n\rangle$.
In order to build $\Tor(\mapP)$, one adds a circle, say $z$, and for each
$i$ a $2$-cell attached to the $1$-skeleton along $a_i z \mapP_*(a_i)^{-1} z^{-1}$.
Hence, $\pi_1(\Tor(\mapP))$ is a quotient of the free group on $n+1$ generators
as follows (cf. \citealt{H}, Prop.~1.26):
\[ \pi_1(\Tor(\mapP))=\langle a_1,\ldots,a_n,z \mid a_1 z = z \mapP_*(a_1),
\ldots, a_n z = z \mapP_*(a_n)\rangle.\]
The space $\Torp(\mapP)$ for a pointed map $\mapP$ is constructed similarly,
but without adding the $1$-cell $z$. This
yields
\[ \pi_1(\Torp(\mapP))=\langle a_1,\ldots,a_n \mid a_1 = \mapP_*(a_1),\ldots,a_n=\mapP_*(a_n)\rangle.\]

\begin{example}
\label{ex:degree2}
Let $P=S^1$ and let $\mapP$ be a degree-$2$ map.
Then $\pi_1(S^1)\cong \langle a \rangle$
and the group $\pi_1(\Torp(f_{(N,L)}))$ is trivial
because $\mapP_*(a) = a^2$.
Therefore,
$\pi_1(\Torp(f_{(N,L)}))\cong\langle a \mid a= a^2\rangle = \{e\},$
the trivial group.
But $\pi_1(\Tor(f_{(N,L)})) \cong \langle a, z \mid az = z a^2\rangle$, which
is not the free group on one generator.
\end{example}

Since we would like to be able to compare Conley indices numerically,
we show how this can be done from the presentation of a group 
with finitely many generators as in Example~\ref{ex:degree2}. We
use the software package \citeauthor{GAP}.
Given $G$, the fundamental group from the example, the software
lists all the subgroups $S$ of $G$ with index $[G:S]\leq 3$
and then computes the abelianization for each of these $S$.
Excecuting the GAP code

\begin{verbatim}
F:=FreeGroup("a","z");
G:=F/ParseRelators(F,"az = za^2");
subgroups:=LowIndexSubgroupsFpGroup(G,3);
Print(List(subgroups,IndexInWholeGroup),"\n");   # indices
Print(List(subgroups,AbelianInvariants));  # abelianizations
\end{verbatim}
yields the output
\begin{verbatim}
[ 1, 2, 3, 3 ]
[ [ 0 ], [ 0, 3 ], [ 0, 7 ], [ 0 ] ]
\end{verbatim}
The groups are represented by giving torsion coefficients, e.g.,
$\mathtt{[ 0, 3 ]}$ represents the 
abelianization $\ZZ \times \ZZ/3\ZZ$ of a subgroup $S$ with index $2$.
In particular, the Conley index in our example is not trivial.

A commonly used algebraic invariant of the Conley index is the
\emph{homological Conley index}, by which we mean the shift equivalence class of 
reduced homology $\rHo(f_{(N,L)})$,
where linear maps $\mapP,\mapQ$ are shift equivalent if there are linear maps
$r,s$ such that $r\mapP = \mapQ s$, $s \mapQ = r\mapP$, $sr=\mapP^n$
and $rs=\mapQ^n$ for some $n\in\NN$.
This is not the only useful algebraic invariant as we show in the following example.

\begin{example}
The mapping torus index contains information
which the homological Conley index cannot represent.
Let $P=S^1\vee S^1$ with circles $a$ and $b$, and let $\mapP\colon P \to P$ 
such that $a \mapsto aba^{-1}b^{-1}$ and $b \mapsto a^{-1} bab^{-1}$,
This induces the trivial (zero) homomorphism in reduced homology.
The fundamental group of its mapping torus is
\[ \pi_1(\Tor(\mapP))= \langle a,b,z \mid az = z a b a^{-1}b^{-1},
  bz = z a^{-1} b a b^{-1} \rangle. \]
Similarly to the example above, GAP computes that $\pi_1(\Tor(\mapP))$ has a subgroup
with index $5$ and abelianization $\ZZ \times \ZZ/3\ZZ \times \ZZ/8\ZZ$.
The shift equivalence class of $[\mapP]$ is therefore not trivial.
But this information is not visible when using homology $\rHo_*(\mapP)$,
which is shift equivalent to the graded module homomorphism $0\to 0$.
It also seems hard to see that $[\mapP]$ is not shift equivalent
to the trivial map $\{*\} \to \{*\}$ directly from
the definition of shift equivalence.
\end{example}

\begin{example}
Let $f_1\colon \RR \to \RR, f_1(x) = 2x$. Then $\{0\}$ is an isolated fixed point
with index pair $([-2,2],[-2,-1]\cup[1,2])$, and the index map is homotopic to the
identity on $S^1$.
Its mapping torus index is $\CT(\{0\},f_1)=[S^1\times S^1]$,
and $\CTp(S,f_1)=[S^1\rtimes S^1]$.

Let $f_2\colon \RR\to\RR, f_2(x) = -x^3$. Then $\{-1,1\}$ is an isolated invariant set with
\[\CTp(\{-1,1\},f_2)=[S^1 \rtimes S^1] = \CTp(\{0\},f_1).\]
This equality can be seen using Theorem~\ref{thm:suspension_index}.
But the shift equivalence indices of $(\{0\},f_1)$ and $(\{-1,1\},f_2)$ differ.
Indeed, choosing some field $\FF$,
the homological Conley index of $(\{0\},f_1)$ in first homology
is the identity on $\FF$,
whereas the corresponding homological Conley index of $(\{-1,1\},f_2)$
is an automorphism of $\FF^2$. They cannot be shift equivalent
(\citealt{MM}).
\end{example}

\begin{example}
Let $g\colon \RR \to \RR$, $g(x)=-2x$. Then $\{0\}$ is an isolated fixed point
with index pair $([-2,2],[-2,-1]\cup[1,2]\})$ and $g_{(N,L)}$ is a map on $S^1$
with degree $-1$. Hence the mapping torus index of $(\{0\},g)$ is 
(the homotopy type of) the Klein bottle.
\end{example}

\begin{example}
We recall Example~6.1 from \citet{Sz}. This example
has non-trivial shift equivalence index,
whereas the indices defined 
by \citet{M90} and \citet{RS} are trivial.
The mapping torus index offers some more insight:
Let
$X=(-\infty,0]\cup \{2^{-k} \mid k \in \NN\} \subset \RR$
and $h\colon X \to X,$
\[ h(x) = \begin{cases} 2x &\text{if } x \leq 0, \\
                        x/2 &\text{if } x \geq 0.
		  \end{cases} \]
Then $(N,L) = (X\cap[-2,1],[-2,-1])$ is an index pair for $(\{0\},h)$.
Its mapping torus index $\CT(\{0\},h)$
is the union of a helix approaching $\{0\}\times S^1$ and the
cylinder $[-1,0]\times S^1$.
This space is compact and connected,
but neither pathwise connected nor locally connected.
In particular, $\CT(\{0\},h) \neq [S^1]$.
\end{example}

\begin{example}[Smale's horseshoe]
\label{ex:horseshoe}
Consider Smale's $U$-horseshoe $f\colon \RR^2 \to \RR^2$,
a homeomorphism which bends the unit square $N=[0,1]\times[0,1]$ to form a horseshoe,
which is sketched in Fig.~\ref{fig:horseshoe}.
We consider the following index pair
used in \citet{M90}, Example~8.1, and \citet{MM}.
Let $L_1 = [0,1]\times[0,1/5]$, $L_2 = [0,1]\times[2/5,3/5],$
$L_3 = [0,1]\times [4/5,1]$ and $L=L_1\cup L_2\cup L_3$.
Then $(N,L)$ is an index pair for $(\Inv(N,f),f)$ and
$N/L \simeq S^1 \vee S^1$ with fundamental group the free group
on two generators $a$ and $b$, where
$f_{(N,L)*}(a)=ab$ and $f_{(N,L)*}(b)=b^{-1}a^{-1}$. Now
\[ \pi_1(\Tor(f_{(N,L)}))\cong \langle a,b,z \mid
az=zab,bz=zb^{-1}a^{-1} \rangle \cong \langle z \rangle \cong \ZZ.\]
This can be seen by observing that the left relation is equivalent
to $b^{-1}a^{-1}=z^{-1}a^{-1}z$ and inserting this into the right relation.

\colorlet{lightGray}{gray!50}
\tikzset{
  pics/carc/.style args={#1:#2:#3}{
    code={
      \draw[pic actions] (#1:#3) arc(#1:#2:#3);
    }
  }
}
\begin{figure*}
\centering
\begin{subfigure}[t]{0.3\textwidth}
\raisebox{1.15cm}{
\begin{tikzpicture}[scale=2.5,path/.append style={
   postaction=decorate,
   thick,
 }]
  \foreach \y in {0,0.4,0.8} 
    \fill[lightGray] (0,\y) rectangle (1,\y+0.2);
   \draw[path] (0,0) |- (1,1);
   \draw[path] (0,0) -| (1,1);
   \draw[path] (0,0.2) -- (1,0.2);
   \node at (0.5,0.1) {\small $L_1$};
   \draw[path] (0,0.4) -- (1,0.4);
   \node at (0.5,0.5) {\small $L_2$};
   \draw[path] (0,0.6) -- (1,0.6);
   \node at (0.5,0.9) {\small $L_3$};
   \draw[path] (0,0.8) -- (1,0.8);
   \draw[thin,->] (0.3,0.1) -- (0.3,0.3);
   \draw[thin] (0.3,0.3) -- (0.3,0.5);
   \node at (0.23,0.28) {\small $a$};
   \draw[thin,->] (0.7,0.5) -- (0.7,0.7);
   \draw[thin] (0.7,0.7) -- (0.7,0.9);
   \node at (0.77,0.69) {\small $b$};
\end{tikzpicture}
}
\end{subfigure}
\begin{subfigure}[t]{0.3\textwidth}
\raisebox{.1cm}{
\begin{tikzpicture}[scale=2.5,path/.append style={
   postaction=decorate,
   thick,
 }]
   \fill[lightGray] (0.2, 1) arc [start angle=180, delta angle=-180, radius=0.3];
   \fill[white] (0.4, 1) arc [start angle=180, delta angle=-180, radius=0.1];
   \fill[lightGray] (0.2,0) rectangle (0.4,-0.2);
   \fill[lightGray] (0.6,0) rectangle (0.8,-0.2);
   \draw[path] (0,0) -- (0,1);
   \draw[path] (0,1) -- (1,1);
   \draw[path] (1,1) -- (1,0);
   \draw[path] (1,0) -- (0,0);
   \draw[path] (0.2,-0.2) -- (0.2,1);

   \draw[path] (0.2, 1) arc [start angle=180, delta angle=-180, radius=0.3];
   \draw[path] (0.8,1) -- (0.8,-0.2);
   \draw[path] (0.8,-0.2) -- (0.6,-0.2);
   \draw[path] (0.6,-0.2) -- (0.6,1);
   \draw[path] (0.4, 1) arc [start angle=180, delta angle=-180, radius=0.1];
   \draw[path] (0.4,1) -- (0.4,-0.2);
   \draw[path] (0.4,-0.2) -- (0.2,-0.2);
   \node at (0.3,-0.3) {\small $f(L_1)$};
   \node at (0.5,1.18) {\small $f(L_2)$};
   \node at (0.7,-0.3) {\small $f(L_3)$};
\end{tikzpicture}
}
\end{subfigure}
\begin{subfigure}[t]{0.3\textwidth}
\begin{tikzpicture}[scale=2.5,path/.append style={
   postaction=decorate,
   thick,
 }]
   \fill[lightGray] (0.2,0) rectangle (0.4,-0.2); 
   \fill[lightGray] (0.6,1) rectangle (0.8,1.2); 

   \draw[lightGray, line width=0.5cm] (0.8,1) pic{carc=0:180:1.25};
   \fill[lightGray] (1.2,1.02) rectangle (1.4,-0.02);
   \draw[lightGray, line width=0.5cm] (1,0) pic{carc=180:0:-0.75};
   \draw[path] (0,0) -- (0,1);
   \draw[path] (0,1) -- (1,1);
   \draw[path] (1,1) -- (1,0);
   \draw[path] (1,0) -- (0,0);
   \draw[path] (0.2,-0.2) -- (0.2,1);
   \draw[path] (0.2, 1) arc [start angle=180, delta angle=-180, radius=0.6];
   \draw[path] (1.4,1) -- (1.4,0);
   \draw[path] (1.4,0) arc [start angle=180, delta angle=-180, radius=-0.4];
   \draw[path] (0.6,0) -- (0.6,1.2);
   \draw[path] (0.6,1.2) -- (0.8,1.2);
   \draw[path] (0.8,1.2) -- (0.8,0);
   \draw[path] (1.2,0) arc [start angle=180, delta angle=-180, radius=-0.2];
   \draw[path] (1.2,0) -- (1.2,1);
   \draw[path] (0.4, 1) arc [start angle=180, delta angle=-180, radius=0.4];
   \draw[path] (0.4,1) -- (0.4,-0.2);
   \draw[path] (0.4,-0.2) -- (0.2,-0.2);
   \node at (0.3,-0.3) {\small $g(L_1)$};
   \node at (0.22,1.5) {\small $g(L_2)$};
   \node at (0.98,1.1) {\small $g(L_3)$};
\end{tikzpicture}
\end{subfigure}
\caption{The components of $L$, the exit set for Example~\ref{ex:horseshoe}, and their images
under $f$ and $g$, respectively.}
\label{fig:horseshoe}
\end{figure*}
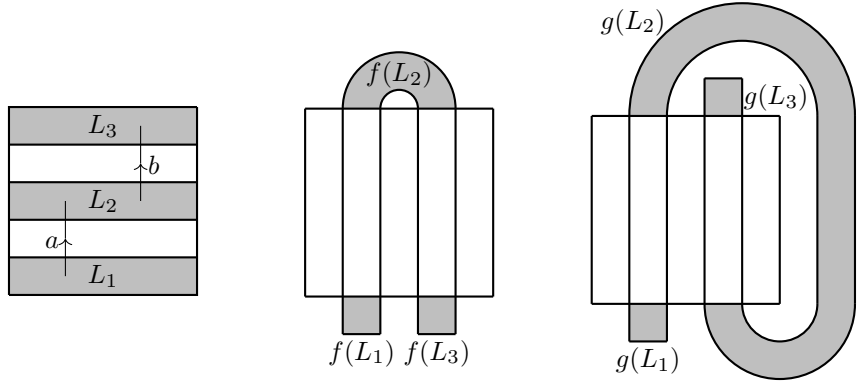

Similarly $(N,L)$ is an index pair for $(\Inv(N,g),g)$, where $g\colon \RR^2 \to \RR^2$ is
the $G$-horseshoe. The images of $N$ and $L$ are sketched in the right subfigure of Fig.~\ref{fig:horseshoe}. 
Here, $g_{(N,L)*}(a) = ab$ and $g_{(N,L)*}(b) = ab$, hence
\[ \pi_1(\Tor(g_{(N,L)}))\cong \langle a,b,z\mid az=zab,bz=zab \rangle
  \cong \langle a, z \mid az=za^2 \rangle, \]
the group from Example~\ref{ex:degree2}. Hence, its (unreduced) mapping torus index is non-trivial.
On the contrary, $\pi_1(\Torp(g_{(N,L)}))$ is trivial.
\end{example}

\section{Definition via suspension semiflow on $X$}
\label{sec:compareFloer}

The reduced mapping torus index is equivalent to the flow Conley index
of the suspension semiflow in the sense presented here.
The results of this section are also included in
Sect.~4 of the unpublished paper
\emph{Morse inequalities and zeta functions}\footnote{
currently accessible at
\url{https://people.math.ethz.ch/~salamon/PREPRINTS/zeta.pdf}}
written by J.~W. Robbin, D.~A. Salamon and E.~C. Zeeman in 1989.
The idea of considering the Conley index for the suspension semiflow
also appeared in~\citet{F}.

Let $f\colon X \to X$ be a discrete dynamical system. For a set $A\subset X$,
let $\suspend A := q(A\times [0,1]) \subset \Tor(f)$,
where $q\colon X\times[0,1]\to\Tor(f)$ is the quotient map; i.e., 
\[\suspend A = \{ [x,\theta] \in \Tor(f) \mid x\in A, \theta\in[0,1]\}
=\frac{A\times[0,1]\cup f(A)\times\{0\}}{(x,1)\sim(f(x),0)}.\]
Note that, given an invariant set $S$ of $f$, $\suspend S = \Tor(f\!\!\restriction_S)$,
the mapping torus of the restriction of $f$ to $S$.

For the following proof of Theorem~\ref{thm:suspension_index}, we
recall a special kind of index pair for maps.
A compact pair $(N,L)$ is a \emph{strong index pair} for an isolated
invariant set $S$ of $f$ if
\begin{enumerate}
  \item $S=\Inv(\cl(N\setminus L),f) \subset \Int (N\setminus L)$,
  \item\label{defStrongIndexPair2} $f(L) \cap N \subset L$, and
  \item\label{defStrongIndexPair3} $f(N\setminus L) \subset N$.
\end{enumerate}

A strong index pair exists for every isolated invariant set $S$
(\citealt{Sz}, Theorem~3.1;
\citealt{MM}, Theorem~3.25)
and a strong index pair is an index pair
(\citealt{RS}, Corollary~4.4).

We recall the definition of an index pair for a semiflow
$\phi\colon X\times [0,\infty) \to X$ given by \citet{C}.
Given a subset $M \subset X$, its invariant part is
\[ \Inv(M,\varphi) = \{ x \in M \mid \text{ there is a solution 
$\gamma\colon\RR\to M$ of $\varphi$ with $\gamma(0)=x$}\}. \]
A compact pair $(\tilde{N},\tilde{L})$ is an \emph{index pair for} 
$(S,\phi)$ if the following conditions are fulfilled:

\begin{enumerate}
\item\label{defIndexPairFlow1} $\Inv(\cl(\tilde{N}\setminus \tilde{L}),\varphi) \subset \Int(\tilde{N}\setminus \tilde{L})$;
\item\label{defIndexPairFlow2} if $x\in \tilde{L}, t>0, \varphi(x,[0,t]) \subset \tilde{N}$,
  then $\varphi(x,[0,t]) \subset \tilde{L}$;
\item\label{defIndexPairFlow3} if $x \in \tilde{N}, t>0, \varphi(x,t) \notin \tilde{N}$,
  then there is a $t' \in [0,t]$ such that $\varphi(x,t') \in \tilde{L}$ and $\varphi(x,[0,t'])\subset \tilde{N}$.
\end{enumerate}

\begin{lemma}
\label{lem:indexPairs}
Let $(N,L)$ be a strong index pair for $(S,f)$. Then $(\suspend N,\suspend L)$ is
an index pair for $(\suspend S,\phi_f)$.
\end{lemma}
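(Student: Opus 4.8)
The plan is to verify each of the three defining conditions \ref{defIndexPairFlow1}--\ref{defIndexPairFlow3} for an index pair of a semiflow directly, using the corresponding property of the strong index pair $(N,L)$ and the explicit description of the suspension semiflow $\phi_f$. Throughout, the key dictionary is that a point of $\Tor(f)$ is of the form $[x,\theta]$ with $x\in X$, $\theta\in[0,1]$, and that following the flow from $[x,\theta]$ for time $t$ produces $[f^{\lfloor\theta+t\rfloor}(x),\theta+t-\lfloor\theta+t\rfloor]$; in particular, a trajectory stays inside $\suspend N$ (resp.\ inside $\suspend L$) precisely as long as the underlying $f$-iterates stay in $N$ (resp.\ in $L$), with the subtlety that crossing the ``seam'' $\theta=1\sim 0$ applies one iterate of $f$. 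First I would record the elementary observations that $\suspend N$ and $\suspend L$ are compact (continuous images of $N\times[0,1]$, $L\times[0,1]$), that $\suspend L\subset\suspend N$, and that $\suspend N\setminus\suspend L=\suspend(N\setminus L)$ is \emph{not} quite right set-theoretically at the seam --- rather, one should work with $\cl(\suspend N\setminus\suspend L)$ and note it equals $\suspend(\cl(N\setminus L))$ or is contained in it; pinning down this identification cleanly is the first technical point.

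For condition \ref{defIndexPairFlow1}, I would show $\Inv(\cl(\suspend N\setminus\suspend L),\phi_f)=\suspend S$ and that this sits in the interior of $\suspend N\setminus\suspend L$ inside $\Tor(f)$. A full $\phi_f$-solution through $[x,\theta]$ staying in $\cl(\suspend N\setminus\suspend L)$ forces, by the seam analysis, a full $f$-solution through $x$ in $\cl(N\setminus L)$, hence $x\in\Inv(\cl(N\setminus L),f)=S$; conversely $S$ is $f$-invariant so $\suspend S=\Tor(f\!\restriction_S)$ is $\phi_f$-invariant. For the interior statement, $S\subset\Int(N\setminus L)$ lifts to $\suspend S\subset\Int_{\Tor(f)}(\suspend N\setminus\suspend L)$ because $q$ is an open map on the relevant region (again the seam needs care: a neighborhood of $[x,0]$ with $x\in S$ in $\Tor(f)$ pulls back to neighborhoods of both $(x,0)$ and $(x,1)\sim(f(x),0)$, so one uses $f(S)=S$).

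For conditions \ref{defIndexPairFlow2} and \ref{defIndexPairFlow3}, the strong index pair axioms $f(L)\cap N\subset L$ and $f(N\setminus L)\subset N$ are exactly what makes the seam crossings compatible. For \ref{defIndexPairFlow2}: if $[x,\theta]\in\suspend L$ and its forward orbit stays in $\suspend N$ up to time $t$, then tracking iterates, each time the orbit is at a seam the point has an underlying value in $L$ with next iterate in $N$, so by $f(L)\cap N\subset L$ it stays in $L$; between seams $\theta$ just increases, staying in the $L$-fiber; hence the orbit stays in $\suspend L$. For \ref{defIndexPairFlow3}: if the forward orbit of $[x,\theta]\in\suspend N$ eventually leaves $\suspend N$, consider the first seam crossing (or the endpoint) at which the underlying iterate leaves $N$; the previous underlying iterate lies in $N$ but maps outside $N$, so by $f(N\setminus L)\subset N$ it cannot have been in $N\setminus L$, hence it was in $L$, giving the required exit point $t'$ with $\phi_f([x,\theta],t')\in\suspend L$ and $\phi_f([x,\theta],[0,t'])\subset\suspend N$. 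I expect the main obstacle to be bookkeeping at the seam --- precisely formalizing ``the forward $\phi_f$-orbit of $[x,\theta]$ up to time $t$ lies in $\suspend N$ iff the finite string $f^{\lceil\theta\rceil-1}(x),\dots$'' in a way that cleanly handles $\theta\in\{0,1\}$ and half-open versus closed time intervals --- rather than any conceptual difficulty; once the correspondence is stated as a lemma, conditions \ref{defIndexPairFlow1}--\ref{defIndexPairFlow3} follow mechanically from the three strong-index-pair axioms.
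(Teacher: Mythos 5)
Your proposal is correct and follows essentially the same route as the paper: verify conditions \ref{defIndexPairFlow1}--\ref{defIndexPairFlow3} directly by translating a $\phi_f$-orbit in $\suspend N$ into a finite string of $f$-iterates, using the relation $\cl(\suspend N\setminus\suspend L)\subset\suspend\cl(N\setminus L)$ for \ref{defIndexPairFlow1} and the strong-index-pair axioms $f(L)\cap N\subset L$ and $f(N\setminus L)\subset N$ for \ref{defIndexPairFlow2} and \ref{defIndexPairFlow3}, respectively. One small imprecision: at the seam, a neighborhood of $[x,0]$ in $\Tor(f)$ pulls back to a neighborhood of $(x,0)$ together with neighborhoods of $(y,1)$ for $y\in f^{-1}(x)$ (not $(x,1)\sim(f(x),0)$), which is why the paper's interior argument takes $V=f^{-1}(U)$ rather than invoking $f(S)=S$.
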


\begin{proof}
For~\ref{defIndexPairFlow1}, we
first show that $\suspend S\subset\Int(\suspend N\setminus \suspend L)$:
Let $x\in S, 0\leq\theta\leq 1$. It suffices to consider $\theta<1$
(since $[x,1]=[f(x),0]$).
Pick an open neighborhood $U$ of $x$ with $U\subset N\setminus L$.
If $\theta\in(0,1)$, one gets $[x,\theta]\in q(U\times(0,1))\subset \suspend N\setminus \suspend L$.
If $\theta=0$, let $V=f^{-1}(U)\subset N\setminus L$.
Then $q(U\times [0,1) \cup V\times (0,1]) \subset \suspend N\setminus \suspend L$
is an open neighborhood of $[x,0]$.

Now we show $\suspend S=\Inv(\cl(\suspend N\setminus \suspend L),\phi_f)$:
The inclusion ``$\subset$'' holds because $\suspend S$ is obviously
an invariant set for $\varphi_f$. For the other inclusion, first observe that
\[ \suspend N\setminus \suspend L \subset \suspend (N\setminus L) \subset \suspend  \cl (N\setminus L).\]
The left inclusion follows because $[x,\theta]\in \suspend N\setminus \suspend L$
implies $x\in N\setminus L$. These sets differ in general
because if $x\in N\setminus L$ and $f(x)\in L$,
then $[x,1]=[f(x),0]\in \suspend L$.
Since the set on the right is compact,
we get $\cl (\suspend N\setminus \suspend L) \subset \suspend \cl(N\setminus L)$.
Now let $[x,\theta] \in \Inv(\cl(\suspend N\setminus \suspend L),\phi_f)$ with $0\leq \theta\leq 1$,
i.e., there is a solution $\gamma\colon \RR \to \cl(\suspend N\setminus \suspend L)$ of $\phi_f$
with $\gamma(0) = [x,0] \in \cl(\suspend N\setminus \suspend L)$.
The curve $\gamma$ yields a solution $\bar\gamma\colon \ZZ \to \cl(\suspend N\setminus \suspend L)$ 
of $f$ with $\bar\gamma(0) = x$ and
$[\bar\gamma(n),0] \in \cl(\suspend N\setminus \suspend L)\subset \suspend \cl (N\setminus L)$ for
all $n\in\ZZ$. Therefore, $\bar\gamma(n)\in\cl(N\setminus L)$.
Since $\cl(N\setminus L)$ is an isolating neighborhood for $S$,
this yields $x\in S$ and hence $[x,\theta]\in \suspend S$.

For~\ref{defIndexPairFlow2}, consider
$x\in L$, $0\leq\theta\leq 1$,
and a trajectory $\phi_f([x,\theta],[0,t]) \subset \suspend N$ for some $t>0$.
Then $\phi_f([x,0],[0,\theta+t]) \subset \suspend N$.
Let $n:= \lfloor \theta + t \rfloor.$
Then $f^k(x) \in L$ for each $0\leq k \leq n$
since $f(L) \cap N \subset L$.
Therefore, $\phi_f([x,\theta],[0,t]) \subset \suspend L$.

For~\ref{defIndexPairFlow3}, consider some $x\in N$ with $0\leq \theta \leq 1$,
and suppose that $\phi_f([x,\theta],t)=\phi_f([x,0],\theta+t)\notin \suspend N$
for some $t>0$.
Let $n:=\lfloor \theta+t \rfloor$.
Then $f^n(x) \notin N$. Let
\[ m:=\max \{ k\in\mathbb{N} \mid f^i(x) \in N \text{ for all } 0\leq i \leq k\} < n. \]
Then $\phi_f([x,\theta],[0,m]) \subset \suspend N$.
Now assume $f^m(x) \notin L$. Then $f^{m+1}(x) \in N$ because
$f(N\setminus L) \subset N$. This contradicts the definition of $m$.
Overall, $f^m(x) \in L$, and therefore $\varphi_f([x,\theta],m) \in \suspend L$.
\end{proof}

\begin{lemma}
\label{lem:quotientTori}
If $(N,L)$ is a strong index pair for $(S,f)$, then $\Torp(f_{(N,L)}) = \suspend N/\suspend L.$
\end{lemma}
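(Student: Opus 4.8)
The goal is to identify the reduced mapping torus $\Torp(f_{(N,L)})$ with the quotient $\suspend N / \suspend L$. The plan is to produce mutually inverse continuous bijections between the two, which suffices since both spaces are compact Hausdorff quotients of compact metric spaces. First I would unwind both definitions. By definition $\Torp(f_{(N,L)}) = \bigl( (N/L) \rtimes [0,1] \bigr) / {\sim}$, i.e.\ the quotient of $(N/L) \times [0,1]$ that collapses $\{[L]\} \times [0,1]$ to a point and identifies $([x],1)$ with $(f_{(N,L)}([x]),0)$. On the other side, $\suspend N = q(N \times [0,1])$ and $\suspend L = q(L \times [0,1])$, so $\suspend N / \suspend L$ is the quotient of $N \times [0,1]$ (with the mapping-torus identification $(x,1)\sim(f(x),0)$, restricted to points of $N$) by the subset of all $[x,\theta]$ with $x \in L$.

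Next I would define a map $\Phi \colon N \times [0,1] \to \Torp(f_{(N,L)})$ by $\Phi(x,\theta) = [\,[x],\theta\,]$ (using $[x]$ for the class in $N/L$) and check it descends to $\suspend N / \suspend L$. For the mapping-torus relation: $\Phi(x,1) = [\,[x],1\,] = [\,f_{(N,L)}([x]),0\,]$; if $x,f(x)\in N\setminus L$ this equals $[\,[f(x)],0\,] = \Phi(f(x),0)$, and if not, $f(x)\in L$ so $[\,[f(x)],0\,]$ is the basepoint, matching the fact that $\Phi$ already sends any $(y,\cdot)$ with $y\in L$ to the basepoint — which is exactly the collapsing of $\suspend L$. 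Conversely $\Phi$ sends all of $L\times[0,1]$ to the basepoint $[\{[L]\}\times[0,1]]$, so it factors through $\suspend N/\suspend L$, giving $\bar\Phi$. In the other direction I would build $\Psi\colon (N/L)\times[0,1]\to \suspend N/\suspend L$ by sending $([x],\theta)\mapsto [\,[x,\theta]\,]$ on $N\setminus L$ and $([L],\theta)$ to the basepoint; this collapses $\{[L]\}\times[0,1]$ correctly, and one checks it respects the reduced-mapping-torus identification $([x],1)\sim(f_{(N,L)}([x]),0)$ using the same case split, yielding $\bar\Psi$. Then $\bar\Phi$ and $\bar\Psi$ are inverse to each other by construction.

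The main technical point — and the step I expect to require the most care — is continuity, specifically that $\Psi$ (equivalently $\bar\Psi$) is continuous at points over $[L]$ and that $\bar\Phi$ is continuous where the $\theta=1$ gluing meets $L$: precisely the places where the index map $f_{(N,L)}$ has its ``jump'' to $[L]$. This is where the strong index pair hypothesis enters. Using $f(L)\cap N\subset L$ and $f(N\setminus L)\subset N$ (conditions \ref{defStrongIndexPair2} and \ref{defStrongIndexPair3}), together with the continuity of $f_{(N,L)}$ already guaranteed by the definition of index pair, one sees that the potential discontinuity is exactly absorbed by the collapsing of $\suspend L$ to the basepoint — the same mechanism that makes $f_{(N,L)}$ itself continuous. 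Concretely, near a point $[x,0]$ with $f(x)\in L$, small perturbations either stay in $\suspend L$-neighborhoods (mapped near the basepoint) or land back in $N\setminus L$ consistently with the index-map formula. Since the underlying spaces are compact metric and the quotients are compact Hausdorff, once continuity of both directions is established, the continuous bijection $\bar\Phi$ is automatically a homeomorphism, and it is visibly basepoint-preserving, giving the claimed equality $\Torp(f_{(N,L)}) = \suspend N/\suspend L$ of pointed spaces.
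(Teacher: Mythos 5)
Your proof is correct and takes essentially the same route as the paper's: both arguments amount to identifying $\Torp(f_{(N,L)})$ and $\suspend N/\suspend L$ as quotients of $N\times[0,1]$ and checking that the identification sets agree, with the paper doing this by a terse manipulation of the quotient descriptions and you doing it by writing out explicit inverse maps. The one point worth adjusting is your diagnosis of where the strong index pair hypotheses enter: you frame the technical work as being about continuity, but continuity of $\bar\Phi$ and $\bar\Psi$ is actually automatic from the universal property of quotient maps (both spaces receive quotient maps from the compact space $N\times[0,1]$, and each map is constant on the fibers of the other). The conditions $f(N\setminus L)\subset N$ and $f(L)\cap N\subset L$ are really needed for the \emph{set-theoretic} identifications to coincide -- the first ensures $\suspend N$ adds no time-zero points beyond $f(L)\times\{0\}$ (which lies inside $\suspend L$ and is collapsed), and the second ensures every point of $\suspend L\cap (N\times\{0\})$ is already in $L\times\{0\}$ so that $\Phi$ genuinely factors through the quotient. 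You do invoke both conditions in your well-definedness checks, so the proof goes through; only the meta-commentary about which step needs the hypotheses is slightly off.
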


\begin{proof}
Using $f(N\setminus L) \subset N$, we have
\[ \suspend N = \frac{N\times [0,1] \cup f(L)\times \{0\}}
{(x,1)\sim(f(x),0)\text{ for }x\in N}\subset\Tor(f). \]
Taking the quotient yields
\[ \suspend N/\suspend L 
= \frac{N\times[0,1]/L\times[0,1]}{(x,1)\sim(f(x),0)\text{ for } x,f(x)\in N}
 = \Torp(f_{(N,L)}).\]
\end{proof}

Since we are free to choose an arbitrary index pair for $(S,f)$ and $(\suspend S,\phi_f)$,
respectively,
Lemmas~\ref{lem:indexPairs} and \ref{lem:quotientTori} yield
\begin{theorem}
\label{thm:suspension_index}
For every isolated invariant set $S$ of $f$,
$\CTp(S,f)$ is the flow Conley index of $(\suspend S,\phi_f)$.
\end{theorem}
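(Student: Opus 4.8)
The plan is to combine the two preceding lemmas with the freedom to choose index pairs, together with the fact — already recorded in the excerpt — that a strong index pair exists for every isolated invariant set $S$ of $f$. First I would fix an isolated invariant set $S$ of $f$ and pick a strong index pair $(N,L)$ for $(S,f)$, which exists by the cited theorems of \citet{Sz} and \citet{MM}. By Lemma~\ref{lem:indexPairs}, the pair $(\suspend N,\suspend L)$ is then an index pair for $(\suspend S,\phi_f)$ in the sense of \citet{C}, so the flow Conley index of $(\suspend S,\phi_f)$ is by definition the pointed homotopy type of $\suspend N/\suspend L$. By Lemma~\ref{lem:quotientTori}, this quotient equals $\Torp(f_{(N,L)})$ on the nose, whose pointed homotopy type is, by definition, $\CTp(S,f)$.

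The only remaining point is well-definedness on both sides. On the flow side, the classical theory of \citet{C} guarantees that the homotopy type of $\tilde N/\tilde L$ does not depend on the chosen index pair for $(\suspend S,\phi_f)$, so in particular the value computed from $(\suspend N,\suspend L)$ is the flow Conley index. On the discrete side, we remarked after Theorem~\ref{thm:torus_choice_index_pair} that $\CTp(S,f)$ is a well-defined pointed homotopy type, independent of the index pair used to compute it; in particular we are free to use the strong index pair $(N,L)$. Stringing these identifications together gives
\[ \CTp(S,f) = [\Torp(f_{(N,L)})] = [\suspend N/\suspend L] = \text{(flow Conley index of $(\suspend S,\phi_f)$)}, \]
which is the claim.

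There is essentially no obstacle left: the substantive work has already been done in Lemmas~\ref{lem:indexPairs} and~\ref{lem:quotientTori}, and the theorem is a formal consequence. The one thing worth being careful about is that $\suspend S$ really is an isolated invariant set for the semiflow $\phi_f$, so that "the flow Conley index of $(\suspend S,\phi_f)$" is a meaningful object; but this is exactly the content of condition~\ref{defIndexPairFlow1} verified in the proof of Lemma~\ref{lem:indexPairs}, where we showed $\suspend S = \Inv(\cl(\suspend N\setminus\suspend L),\phi_f)$ with $\suspend S\subset\Int(\suspend N\setminus\suspend L)$, exhibiting $\cl(\suspend N\setminus\suspend L)$ as an isolating neighborhood. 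Hence the statement follows immediately.
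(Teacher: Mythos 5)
Your argument is exactly the paper's proof, just spelled out in more detail: both rest on choosing a strong index pair, applying Lemma~\ref{lem:indexPairs} and Lemma~\ref{lem:quotientTori}, and invoking the independence of each Conley index from the choice of index pair. Your added remark that $\suspend S$ is indeed an isolated invariant set for $\phi_f$ (so the flow index is well-posed) is a sensible point that the paper leaves implicit.
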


\section{Numerical representation}
\label{sec:numerics}

In this section, we sketch an idea for the numerical representation
of the mapping torus of some self-map $f\colon X\to X$.
Using interval arithmetic in rigorous numerics
(see \citealt{KMM}; or \citealt{BGHKMOP}), one can construct
a numerical representation of a covering $Z\subset X\times X$ of the graph
$G(f)$ of $f$.
The sets $Z$ and hence the maps $p$ and $q$ can be represented on a computer,
even though $f$ is not directly known.
Letting $p(x,y)=x$ and $q(x,y)=y$, the map $f$ factors through
$G(f)$ and hence through $Z$ as follows:
Let 
\begin{align*}
\tilde{f}\colon X&\to Z,\\
 x&\mapsto (x,f(x)).
\end{align*}
Then
$f = q \circ \tilde{f}.$
For the diagram $p,q\colon Z\rightrightarrows X$,
we consider its homotopy colimit,
which we also call the \emph{mapping torus of $p$ and $q$} here
(cf. \citealt{H}, Example~2.48),
\begin{equation*}
\label{doubleMappingTorus}
\Tor(p,q):=\frac{(Z \times [0,1]) \coprod X}{(z,0)\sim p(z), (z,1)\sim q(z)}.
\end{equation*}
Analogously, we define $\Tor(\id_X,f)$,
the mapping torus of $\id_X$ and $f$, as the quotient of $(X \times [0,1]) \coprod X$.
It is is homotopy equivalent to $\Tor(f)$ as defined in Sect.~\ref{sec:defIndex}.
Now let 
\[(\tilde{f},\id)_\iT\colon \Tor(\id_X,f)\to\Tor(p,q)\]
be the map induced on the summands by
\[ \tilde{f}\times \id_{[0,1]}\colon X \times [0,1] \to Z\times[0,1] \]
and the identity on $X$, respectively.
Now, $\Tor(p,q)$ is potentially useful for representing $\Tor(f)\simeq\Tor(\id,f)$ because of the following
property.
\begin{proposition}
\label{prop:smale}
If $\tilde{f}$ is a homotopy equivalence, then 
$(\tilde{f},\id_X)_\iT$ is a homotopy equivalence.
\end{proposition}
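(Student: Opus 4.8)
The plan is to recognise $(\tilde{f},\id_X)_\iT$ as the map of homotopy colimits induced by a strictly commuting morphism between the two parallel‑pair diagrams
\[
\begin{tikzcd}
X \arrow[r, shift left, "\id_X"] \arrow[r, shift right, swap, "f"] \arrow[d, "\tilde{f}"] & X \arrow[d, "\id_X"]\\
Z \arrow[r, shift left, "p"] \arrow[r, shift right, swap, "q"] & X,
\end{tikzcd}
\]
and then to invoke the homotopy invariance of the homotopy colimit. The first step is to check that this square is genuinely a morphism of diagrams, i.e.\ that $p\circ\tilde{f}=\id_X$ and $q\circ\tilde{f}=f$; both are immediate from $f=q\circ\tilde{f}$ and $p(x,f(x))=x$. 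Under the standard double‑mapping‑cylinder model of the homotopy colimit of a parallel pair, the top row has homotopy colimit $\Tor(\id_X,f)$, the bottom row has homotopy colimit $\Tor(p,q)$, and the morphism above induces precisely $(\tilde{f},\id_X)_\iT$ (the identity on the $X$‑summand and $\tilde f\times\id_{[0,1]}$ on the cylinder).

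Now the two vertical maps are $\tilde{f}$, a homotopy equivalence by hypothesis, and $\id_X$, trivially one. Since the inclusion $W\times\{0,1\}\hookrightarrow W\times[0,1]$ is a closed cofibration for every space $W$, the double mapping cylinder really does compute the homotopy colimit, so $\Tor(p,q)$ and $\Tor(\id_X,f)$ are honest homotopy pushouts along cofibrations, and a morphism between such squares whose corner maps are all homotopy equivalences induces a homotopy equivalence on pushouts (the gluing lemma). Hence $(\tilde{f},\id_X)_\iT$ is a homotopy equivalence, which is the assertion.

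If one prefers an explicit argument in the style of Section~\ref{sec:shiftEqui}, the key preliminary observation is that $p$ is itself a homotopy inverse of $\tilde{f}$: if $g$ is any homotopy inverse, then $\tilde{f}p\simeq\tilde{f}p\,\tilde{f}g=\tilde{f}(p\tilde{f})g=\tilde{f}g\simeq\id_Z$, while $p\tilde{f}=\id_X$ holds strictly, and consequently $q\simeq q\,\tilde{f}p=fp$. One then defines a candidate inverse $\psi\colon\Tor(p,q)\to\Tor(\id_X,f)$ that is the identity on the $X$‑summand and on the cylinder is $(z,\theta)\mapsto[p(z),2\theta]$ for $\theta\le\tfrac12$ and is governed by a chosen homotopy from $q$ to $fp$ (evaluated at $2\theta-1$) for $\theta\ge\tfrac12$. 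The composites $\psi\circ(\tilde{f},\id_X)_\iT$ and $(\tilde{f},\id_X)_\iT\circ\psi$ are then shown to be homotopic to the respective identities by collapsing the extra $[\tfrac12,1]$‑band along the homotopy $\tilde{f}p\simeq\id_Z$ and the equality $p\tilde{f}=\id_X$, exactly as in the $(r,H)_\iT$ construction used for Theorem~\ref{thm:shiftEquiHtpyEqui}.

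The main obstacle is bookkeeping rather than conceptual difficulty: $fp=q$ only up to homotopy, so the naive ``apply $p$ on the $Z$‑cylinder'' formula is not well defined, and in the explicit version one must fix a homotopy and thread it through the whole construction, tracking it when checking the composites. In the homotopy‑colimit formulation this is absorbed into the gluing lemma, so there the only point needing attention is verifying (as above) that the cylinder ends include as cofibrations, which makes the construction a genuine homotopy colimit.
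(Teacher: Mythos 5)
Your first argument is exactly the paper's: recognize $(\tilde{f},\id_X)_\iT$ as the map on homotopy colimits induced by an objectwise homotopy equivalence of parallel-pair diagrams and invoke homotopy invariance of the homotopy colimit (the paper cites Kozlov, Lemma~15.12, and Hatcher, Prop.~4G.1, for this). The additional explicit mapping-telescope-style argument you sketch is a valid alternative but goes beyond what the paper does.
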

\begin{proof}
This follows from the main property of homotopy colimits
(\citealt{Kozlov}, Lemma~15.12; \citealt{H}, Prop. 4G.1).
\end{proof}
From the representations in rigorous numerics,
it seems hard to show that $\tilde{f}$ is a homotopy equivalence.
But the algorithms therein can construct
an enclosure $Z$ of the graph of $f$ such that
$p^{-1}(x)$ has the homology of the one-point space for all $x\in X$.
In this case, the Vietoris mapping theorem
(\citealt{Vietoris})
shows that $\tilde{f}$ induces an isomorphism in homology
(similar theorems for homotopy groups exist, cf. \citet{Smale}).
Then the following proposition
offers a way to compute the homology of $\Tor(f)$.
\begin{proposition}
\label{prop:vietoris}
If $\tilde{f}$ induces an isomorphism in homology, then
$(\tilde{f},\id_X)_\iT$ induces an isomorphism in homology
$\Ho_*(\Tor(f))\cong\Ho_*(\Tor(p,q))$.
\end{proposition}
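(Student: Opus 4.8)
The plan is to reduce Proposition~\ref{prop:vietoris} to Proposition~\ref{prop:smale} by building an auxiliary space that interpolates between the situation ``$\tilde f$ is a homology isomorphism'' and ``$\tilde f$ is a homotopy equivalence'', and then to invoke the functoriality of the double mapping torus (homotopy colimit) construction. First I would record that both $\Tor(\id_X,f)$ and $\Tor(p,q)$ are homotopy colimits of diagrams over the category $\bullet \rightrightarrows \bullet$, with diagrams $D_1 = (X \rightrightarrows X)$ (the maps $\id_X, f$) and $D_2 = (Z \rightrightarrows X)$ (the maps $p,q$), and that the pair $(\tilde f, \id_X)$ together with the strict commutativity relations $p\tilde f = \id_X$ and $q\tilde f = f$ constitutes a natural transformation $D_1 \to D_2$. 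The induced map on homotopy colimits is exactly $(\tilde f,\id_X)_\iT$.

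Next I would bring in the homotopy colimit spectral sequence (the Bousfield--Kan or Mayer--Vietoris-type spectral sequence for a homotopy colimit over $\bullet \rightrightarrows \bullet$, which in this case is just the long exact Mayer--Vietoris/mapping-torus sequence recalled at the start of Section~\ref{sec:examples}). Concretely, $\Ho_*(\Tor(\id_X,f))$ sits in a long exact sequence built from $\Ho_*(X)$ and the map $\id - f_*$, while $\Ho_*(\Tor(p,q))$ sits in the analogous long exact sequence built from $\Ho_*(Z)$, $\Ho_*(X)$ and $p_* - q_*$. The natural transformation $(\tilde f,\id_X)$ maps the first sequence to the second; on the ``$X$'' summand it is the identity, and on the other terms it is $\tilde f_*\colon \Ho_*(X)\to\Ho_*(Z)$, which is an isomorphism by hypothesis. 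A diagram chase (five lemma applied degreewise to the morphism of long exact sequences) then shows that $(\tilde f,\id_X)_\iT$ induces an isomorphism $\Ho_*(\Tor(\id_X,f))\cong\Ho_*(\Tor(p,q))$; combining with the homotopy equivalence $\Tor(f)\simeq\Tor(\id_X,f)$ noted in Section~\ref{sec:numerics} finishes the proof.

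Alternatively, and perhaps more cleanly, I would pass to homology-CW or $\ZZ$-localizations: replace $\tilde f\colon X\to Z$ by a map of simplicial sets (or CW complexes) that is a homology isomorphism, and cite that a homology isomorphism between the vertices of our diagram induces a homology isomorphism on homotopy colimits — this is the homological analogue of \citet[Lemma~15.12]{Kozlov} / \citet[Prop.~4G.1]{H}, which is what Proposition~\ref{prop:smale} used in the homotopy-equivalence case. That is, the gluing/homotopy-colimit construction preserves objectwise homology equivalences, exactly as it preserves objectwise homotopy equivalences; one only needs that the two diagrams are objectwise cofibrant (which holds, $X$ and $Z$ being nice spaces / the maps $p\times\id$, $q\times\id$, $p,q$ being as in the definition), so the homotopy colimit computes the correct derived functor and hence its homology depends only on the objectwise homology type of the diagram.

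The main obstacle I anticipate is the bookkeeping to make the ``homology equivalence $\Rightarrow$ homology equivalence on homotopy colimits'' step rigorous at the level of generality needed here, i.e.\ without assuming $X$ and $Z$ are CW complexes. The cleanest route is the five-lemma argument in the first paragraph, since the Mayer--Vietoris long exact sequence for $\Tor(p,q)$ is elementary (it follows from excision applied to the cofibration $(Z\times\{0,1\})\hookrightarrow Z\times[0,1]$ inside $\Tor(p,q)$, or directly from \citet[Example~2.48]{H}), and the naturality of that sequence with respect to the map of diagrams is immediate from the strict commutativity $p\tilde f=\id_X$, $q\tilde f=f$. So the real work is: (1) exhibiting both long exact sequences and their natural map, and (2) checking the five-lemma hypotheses degreewise — both of which are routine once the diagram-of-diagrams picture is set up.
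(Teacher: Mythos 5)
Your main argument---the five lemma applied to the morphism of the two mapping-torus long exact sequences, with vertical maps $\tilde f_*$, $\id$, and $((\tilde f,\id_X)_\iT)_*$---is exactly the paper's proof. The one point the paper treats with a bit more care is the commutativity of the square involving the connecting homomorphism $\partial$ (this is not quite ``immediate'' from $p\tilde f=\id_X$, $q\tilde f=f$; the paper justifies it by unwinding $\partial$ into a composite of maps coming from long exact sequences of pairs and invoking their naturality), and your alternative homotopy-colimit/localization route is a reasonable heuristic but, as you yourself note, would need extra work to make rigorous without cofibrancy hypotheses.
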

\begin{proof}
Let $j(x)=[x,0]$ be the inclusion of $X$ into the
respective mapping torus.
Example~2.48 from \citet{H} shows the construction of
homomorphisms $\partial$ such that the
upper and lower sequence in the following diagram are exact:
\[ \begin{tikzcd}
\cdots \ar{r} & \Ho_n(X) \ar{r}{\id-f_*} \ar{d}{\tilde{f}_*}[swap]{\cong} &
    \Ho_n(X)\ar{r}{\incl_*} \ar{d}{\id} &
	\Ho_n(\Tor(\id_X,f)) \ar{r}{\partial} \ar{d}{((\tilde{f},\id_X)_\iT)_*} & 
	\Ho_{n-1}(X)\ar{r} \ar{d}{\tilde{f}_*}[swap]{\cong}  & \cdots \\
\cdots \ar{r} & \Ho_n(Z) \ar{r}{p_*-q_*} & \Ho_n(X)\ar{r}{\incl_*}
	& \Ho_n(\Tor(p,q)) \ar{r}{\partial} & \Ho_{n-1}(Z)\ar{r}& \cdots
\end{tikzcd} \]
The left and middle square commute because
the underlying squares of continuous maps commute.
In order to apply the $5$-lemma,
it remains to check that the right square commutes.
By construction,
\[\partial\colon \Ho_n(\Tor(p,q)) \to \{(\alpha,-\alpha)\mid\alpha\in\Ho_{n-1}(Z)\} \cong \Ho_{n-1}(Z)\]
is the composition of
homomorphisms which are either
part of the long exact sequence of some pair
of spaces or induced by a continuous map.
The desired commutativity follows from the naturality of
these long exact sequences in homology.
\end{proof}

Note that the numerical computation of the homology groups of
some space (here, $\Tor(p,q)$) requires less machinery
than computing the induced map in homology $\Ho_*(f)$.
This might be useful in situations where it is not obvious how to
extract $\Ho_*(f)$ from the representation of $f$, for example
when only a noisy sample of pairs $(x,f(x))$ is given as in \citet{EJM}.
But also if one has an enclosure $Z$ of the graph as above, one could
avoid computing the induced homology $\Ho_*(f)$ if
the homology $\Ho_*(\Tor(f))$ of the mapping torus already contains
the relevant information.

\textbf{Acknowledgements.}
The author would like to thank Marian Mrozek for numerous fruitful discussions
during the development of this article and Dietmar Salamon for
suggesting the unpublished preprint that was helpful
for writing Sect.~\ref{sec:compareFloer}.

\end{document}